\documentclass[11pt]{amsart}
\usepackage{amssymb,amsmath,amsthm}
 \oddsidemargin = 1.2cm \evensidemargin = 1.2cm \textwidth =6.0in
\textheight =8.5in
\newcommand{\shrinkmargins}[1]{
  \addtolength{\textheight}{#1\topmargin}
  \addtolength{\textheight}{#1\topmargin}
  \addtolength{\textwidth}{#1\oddsidemargin}
  \addtolength{\textwidth}{#1\evensidemargin}
  \addtolength{\topmargin}{-#1\topmargin}
  \addtolength{\oddsidemargin}{-#1\oddsidemargin}
  \addtolength{\evensidemargin}{-#1\evensidemargin}
  }
\shrinkmargins{0.5}

\newcommand{\sm}[4]{\left(\begin{smallmatrix}#1&#2\\ #3&#4 \end{smallmatrix} \right)}
\newcommand{\mat}[4]{\left(\begin{matrix}#1&#2\\ #3&#4 \end{matrix} \right)}
\newcommand{\matt}[9] { \left(\begin{matrix} #1&#2&#3\\ #4&#5&#6\\ #7&#8&#9 \end{matrix}  \right)  }
\newtheorem{theorem}{Theorem}
\newtheorem{lemma}[theorem]{Lemma}

\newtheorem{proposition}[theorem]{Proposition}
\newtheorem*{definition}{Definition}

\theoremstyle{remark}

\numberwithin{theorem}{section} \numberwithin{equation}{section}

\newcommand{\Tau}{\tau}
\newcommand{\R}{\mathbb{R}}
\newcommand{\C}{\mathbb{C}}

\newcommand{\Q}{\mathbb{Q}}

\newcommand{\Z}{\mathbb{Z}}
\newcommand{\N}{\mathbb{N}}

\newcommand{\sump}{\sideset{}{'}\sum}

\def\H{\mathbb{H}}
\def\M{\mathbb{M}}
\def\Tau{\mathcal{T}}

\begin{document}
\title{Hecke Duality Relations of Jacobi forms}
\author{Kathrin Bringmann}
\address{School of Mathematics\\University of Minnesota\\ Minneapolis, MN 55455 \\U.S.A.}
\email{bringman@math.umn.edu}  
\author{Bernhard Heim}
\footnote{The main part of this research was carried out at the \textit{Mathematisches Forschungsinstitut Oberwolfach}  during  the \textit{Research in Pairs} program from  june 3-16 2007}
\address{Max-Planck Institut  f\"ur Mathematik, Vivaitsgasse 7, 53111 Bonn, Germany}
\email{heim@mpim-bonn.mpg.de}

\date{\today}
\begin{abstract}
In this paper we introduce a new  subspace  of Jacobi forms of higher degree  via 
certain  relations among    Fourier coefficients. 
We prove that this space can also be characterized by  duality properties of   certain  distinguished 
embedded Hecke operators.   
We then show that this space  is Hecke invariant with respect to all good  Hecke operators.
 As explicit examples we give    Eisenstein series.   
 Conversely we show the existence of forms that are  not contained in this space.  
\end{abstract}
\maketitle
\section{Introduction and statement of results}  
In this paper we prove the existence of a non-trivial Hecke invariant proper subspace of the space of 
Jacobi forms on $\H_2 \times \C^2$ which satisfies  Hecke duality relations. 

Thirty years ago Saito und Kurokawa \cite{Ku78} conjectured  the existence of a distinguished subspace 
of $M_k^2$, the vector space of Siegel modular forms of degree $2$ and weight $k\in \N$. From the degeneration 
of the spinor   $L$-function of Hecke eigenforms
they conjectured that there exists a correspondence to a  space of elliptic modular forms.
At the same time,  Maass studied the Fourier coefficients of Eisenstein series of degree $2$ 
and discovered interesting relations
among them. He introduced the Spezialschar \cite{Ma79I, Ma79II,Ma79III}, which is 
a subspace of Siegel modular forms of degree $2$  that 
is defined via  certain relations among Fourier coefficients.  
A Siegel modular form $F \in M_k^2$ is in the \textit{Maass Spezialschar} if for all
 positive definite half-integral $2 \times 2$ matrices $T$ the Fourier coefficients 
$A(T)$ of $F$ satisfy the relation
\begin{equation} \label{Maassrelation}
A\big([n,r,m]\big) = \sum_{d \vert (n,r,m)} d^{k-1} A\left(\left[\frac{nm}{d^2},\frac{r}{d},1\right]\right),
\end{equation}
where   we identify $T = \sm{n}{r/2}{r/2}{m}$ with the quadratic form $[n,r,m]$.  Relation
(\ref{Maassrelation}) is   equivalent to the fact that  $A(T)$ only depends on the discriminant of $T$ and 
the greatest common divisor of its entries. 
Explicit examples for elements in the Spezialschar are given by   Eisenstein series.      
Moreover the   Maass Spezialschar is Hecke invariant and     provides an  example
of forms that do not satisfy   an analogue of  the Ramanujan-Petersson  conjecture in higher dimensions \cite{Za80}.

At the end of the last century Duke and Imamoglu generalized   the Saito-Kurokawa conjecture to higher degree.  Let  $M_k^n$ be the space of Siegel modular forms of weight $k$ and degree $n \in \N$.
Duke and Imamoglu conjectured that if $n+k \equiv 0 \pmod{2}$, then  there exists   a  Hecke invariant isomorphism  between the space of the elliptic forms of weight $2k$ and a subspace of $M_{k+n}^{2n}$. Note that $M_{k+n}^{2n}$  has  even degree.
In 2001 this conjecture was proven by Ikeda \cite{Ik01} and subsequently    Kohnen and Kojima \cite{KK05} gave a linearization of this lift.   
Recently the second author characterized these lift via Hecke operators   \cite{He06}.  

It would be  interesting to construct 
lifts and formulas  also in the case of Siegel modular forms of odd degree.  
Here we make a first  step towards this goal and investigate 
Jacobi forms of degree $2$ since these 
 arise for example as Fourier Jacobi coefficients of Siegel modular forms of degree $3$.  
Let $\Phi \in J_{k,m}^2$   (the vector space of Jacobi forms of weight $k$, degree $2$, and index $m$)
with Fourier coefficients $C(N,R)$.  
We use the parametrization  
$ N = \left( \begin{smallmatrix}
n_{11} & \frac{n_{12}}{2} \\
\frac{n_{12}}{2} & n_{22}
\end{smallmatrix} \right)$
and $R = (r_1,r_2)$, and   define the invariants:
\begin{equation}\label{invariants}
D_1 := -4 \cdot \det \left(\begin{matrix} n_{11}&\frac{r_{1}}{2} \\  \frac{r_{1}}{2}  & m \end{matrix} \right),\qquad
D_2 :=  -4 \cdot\det \left(\begin{matrix} n_{22}&\frac{r_{2}}{2} \\  \frac{r_{2}}{2}  & m \end{matrix} \right),
\qquad D :=  -4 \cdot \det \left(\begin{matrix} \frac{n_{12}}{2} &\frac{ r_{1}}{2} \\  \frac{r_{2}}{2}  & m \end{matrix} \right).
\end{equation}  
It will  turn out    that $C(N,R)$ only 
depends upon $D_1,D_2, D$, and $r_1$ and $r_2$ modulo $2m$.  Therefore the following is well-defined: 
\begin{equation}\label{coefficientallg}
C_{r_1,r_2}(D_1,D_2,D):= 
C (N,R),
\end{equation} 
where $r_1$ and $r_2$ are defined modulo $2m$.   
Moreover if $m$ is either $1$ or a prime, then   $C(N,R)$ only depends upon $D_1,D_2$, and $D$. In this case we set  
\begin{equation}\label{coefficient}
C(D_1,D_2,D):= 
C (N,R).
\end{equation}   
Next define the space  $\mathbb{E}_{k,m}$ of ``distinguished" Jacobi forms.  
\begin{definition}[Hecke  duality relation] 
Let $\Phi \in J_{k,m}^2$ with Fourier coefficients $C_{r_1,r_2}(D_1,D_2,D)$ with  
$D_1,D_2,D,r_1,r_2 \in \Z$.
 Let $p$ be a prime with  $(p,2m)=1$.
Then $\Phi$ is an element  of $\mathbb{E}_{k,m}^{(p)}$ if the following relation is satisfied: 						 
\begin{eqnarray} \label{Hecke_Duality_Relations}
\Big( \chi_{{ D_1}}(p) -\chi_{D_2}(p) \Big) \,\, C_{r_{1},r_{2}} \left( D_1,  D_2,D  \right)		&= & \nonumber \\ & &
\!\!\!\!\!\!\!\!\!\!\!\!\!\!\!\!\!\!\!\!
\!\!\!\!\!\!\!\!\!\!\!\!\!\!\!\!\!\!\!\!
\!\!\!\!\!\!\!\!\!\!\!\!\!\!\!\!\!\!\!\!
\!\!\!\!\!\!\!\!\!\!\!\!\!\!\!\!\!\!\!\!
p^{2-k}\, \left( C_{r_{1},pr_2} \left( D_1,D_2 p^2,  D p \right) - 	C_{p r_{1},r_{2}} \left( D_1 p^2,  D_2,D p  \right) \right) \\
& & 
\!\!\!\!\!\!\!\!\!\!\!\!\!\!\!\!\!\!\!\!
\!\!\!\!\!\!\!\!\!\!\!\!\!\!\!\!\!\!\!\!
\!\!\!\!\!\!\!\!\!\!\!\!\!\!\!\!\!\!\!\!
\!\!\!\!\!\!\!\!\!\!\!\!\!\!\!\!\!\!\!\!
+
p^{k-1} \left( C_{r_{1}, \bar p r_2} \left(D_1,  \frac{D_2}{p^2},  \frac{D}{p}  \right)
-
C_{\bar p r_{1},r_{2}} \left( \frac{D_1}{p^2}, D_2, \frac{D}{p} \right) \right).\nonumber
\end{eqnarray}
							Here $p \bar p \equiv 1 \pmod{2m}$ and 
							$\chi_{*} := \left( \frac{*}{p} \right)$. 
							Furthermore we define  
							  \begin{equation} \label{eulen}
							  \mathbb{E}_{k,m} :=   \bigcap_{\substack{ p \text{ prime}\\
							  (p,2m)=1}} \mathbb{E}_{k,m}^{(p)}. 
							  \end{equation}
					\end{definition} 
					Since we   show  that this relation is equivalent to
a property involving   Hecke Jacobi operators,
  we  refer to it as    \textit{Hecke duality relation}.   
For $m=1$ it is known by work of Ibukiyama \cite{Ib} that the space $J_{k,1}^2$ is isomorphic to a space of Siegel modular forms of degree $2$ and half-integral weight.  
In this case he   conjectured the existence of a certain distinguished  subspace which seems to be different from the space considered here. It would be interesting to determine the connection between those.  
\begin{theorem} \label{DualityTheorem}
Assume that  $\Phi \in J_{k,m}^2$ and that $p$ is a prime with   $(p,2m) =1$. Then the following two 
conditions are equivalent:
\begin{enumerate}
\item The function
$\Phi$ is an element of  $\mathbb{E}_{k,m}^{(p)}$.
\item  We have  
$\Phi \vert \left(  T^J(p)^\uparrow - T^J(p)^\downarrow\right) = 0$.  
\end{enumerate}
Here $T^J(p)^{\uparrow}$ and $T^J(p)^{\downarrow}$  are two canonical Hecke Jacobi operators obtained by  
embedding the classical Hecke Jacobi  operator $T^J(p)$ in two different ways (see Section \ref{heckejacobi}). 
\end{theorem}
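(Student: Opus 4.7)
The plan is to prove the equivalence by writing out the Fourier coefficients of $\Phi\vert T^J(p)^\uparrow$ and $\Phi\vert T^J(p)^\downarrow$ explicitly in terms of the $C_{r_1,r_2}(D_1,D_2,D)$ and then observing that equation (\ref{Hecke_Duality_Relations}) is exactly the condition that these two Fourier expansions agree coefficient-by-coefficient.

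First I would recall the classical formula for how $T^J(p)$ (with $(p,2m)=1$) acts on the Fourier expansion of a degree-$1$ Jacobi form $\phi$ of weight $k$ and index $m$ with coefficients $c(n,r)$: up to a standard normalization, the coefficient of $e^{2\pi i(n\tau+rz)}$ in $\phi\vert T^J(p)$ is a sum of three terms, namely $c(p^2n,pr)$, $p^{k-2}\chi_{r^2-4nm}(p)\,c(n,r)$, and $p^{2k-3}c(n/p^2,\bar p r)$, the last term appearing only when $p^2\mid$ the relevant discriminant data. This is the template that both embedded operators follow.

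Next I would use the two embeddings described in Section \ref{heckejacobi}: $T^J(p)^\uparrow$ applies $T^J(p)$ to $\Phi$ viewed as a Jacobi form in the first pair of variables (so it transforms $(n_{11},r_1)$ while leaving $(n_{22},r_2)$ fixed), and $T^J(p)^\downarrow$ does the same in the second pair of variables. Under the change of coordinates given by the invariants (\ref{invariants}), the first embedding alters $D_1$ and $r_1$ (and simultaneously $D$, because $D$ depends on $n_{12}$ and $r_1$), while $D_2$ and $r_2\bmod 2m$ remain untouched; symmetrically for the second embedding, $(D_2,r_2,D)$ change but $(D_1,r_1)$ are fixed. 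Substituting the classical formula in both cases and re-expressing every resulting coefficient in the form $C_{*,*}(\cdot,\cdot,\cdot)$ yields explicit expressions for the Fourier coefficients of $\Phi\vert T^J(p)^\uparrow$ and $\Phi\vert T^J(p)^\downarrow$. The only slightly subtle point is that in the ``descent'' term one has to replace $r_1/p$ (or $r_2/p$) by $\bar p r_1$ (or $\bar p r_2$) modulo $2m$, which is where the $\bar p$ in (\ref{Hecke_Duality_Relations}) comes from.

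Finally I would subtract the two expressions. The middle (diagonal) terms combine into $p^{k-2}\bigl(\chi_{D_1}(p)-\chi_{D_2}(p)\bigr)C_{r_1,r_2}(D_1,D_2,D)$, and the ``ascent'' and ``descent'' terms match exactly the right-hand side of (\ref{Hecke_Duality_Relations}) after multiplying through by $p^{k-2}$. Hence $\Phi\vert(T^J(p)^\uparrow-T^J(p)^\downarrow)=0$ holds if and only if the coefficient equality (\ref{Hecke_Duality_Relations}) holds for every choice of $(D_1,D_2,D,r_1,r_2)$, giving the equivalence. The main technical obstacle is bookkeeping: one has to verify carefully how the mixed invariant $D$ is transformed under each individual coset representative defining $T^J(p)^\uparrow$ and $T^J(p)^\downarrow$, and to check that after passing from $(N,R)$ coordinates to $(D_1,D_2,D,r_1,r_2)$ coordinates all indices align correctly, in particular the appearance of $\bar p$ in the descent terms and the fact that $r_1$ and $r_2$ are well-defined only modulo $2m$.
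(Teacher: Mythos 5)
Your plan is essentially the paper's own proof. The authors carry out exactly this computation via the explicit coset representatives $N_1=\sm{p^2}{0}{0}{1}$, $N_2(a)=\sm{p}{a}{0}{p}$, $N_3(b)=\sm{1}{b}{0}{p^2}$ of $p\,T_0^{(1)}(p)$ together with the Heisenberg sum, tracking $(D_1,D_2,D,r_1,r_2)$ through each change of variables --- which yields precisely your descent, diagonal, and ascent terms with the weights you predict, including the transformation $D\mapsto D/p,\ D,\ pD$ of the mixed invariant and the $\bar p$ twist on $r_1$ --- and then conclude by comparing coefficients using the linear independence of the theta components $\Theta_{r_1,r_2}$.
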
 
The
spaces $\mathbb{E}_{k,m}^{(p)}$  are invariant with respect to all ``good" Hecke operators.  
\begin{theorem} \label{HeckinvTheorem}
Assume that  $p$ and  $q$ are distinct primes with $(pq,2m)=1$. 
Let  $\mathcal{H}_p^{J,2}$ be  the 
local Hecke Jacobi algebra of degree $2$ and define $\mathcal{H}^{J,2} := \otimes_{(p,2mq)=1} \mathcal{H}_p^{J,2}$. 
If $m$ is either $1$ or a prime,  then  $\mathbb{E}_{k,m}^{(q)}$ is invariant with respect to   $\mathcal{H}^{J,2} $. 
\end{theorem}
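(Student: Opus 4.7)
The plan is to reduce Hecke invariance to a commutativity statement and then to verify that statement by locating the relevant operators in local Hecke subalgebras. By Theorem \ref{DualityTheorem}, membership in $\mathbb{E}_{k,m}^{(q)}$ is equivalent to
\[
\Phi \, \big| \, \Delta_q = 0, \qquad \text{where } \Delta_q := T^J(q)^\uparrow - T^J(q)^\downarrow.
\]
Hence to prove that $\mathbb{E}_{k,m}^{(q)}$ is stable under every $T \in \mathcal{H}^{J,2}$, it suffices to show that if $\Phi \, | \, \Delta_q = 0$, then $(\Phi \, | \, T) \, | \, \Delta_q = 0$. This in turn is implied by the operator identity $T \cdot \Delta_q = \Delta_q \cdot T$ on $J_{k,m}^2$. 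Since $\mathcal{H}^{J,2}$ is generated by its local components, I may assume $T \in \mathcal{H}_p^{J,2}$ with $(p,2mq)=1$.

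The first key step is to show that each of the two embedded operators $T^J(q)^\uparrow$ and $T^J(q)^\downarrow$ actually lies in the local algebra $\mathcal{H}_q^{J,2}$. The embeddings correspond to viewing the classical Hecke Jacobi operator $T^J(q)$ either via the upper-left or the lower-right $1 \times 1$ block of the period matrix, so their double coset decompositions involve only elementary divisors that are powers of $q$. Once this identification is made, $\Delta_q$ is an element of $\mathcal{H}_q^{J,2}$. The second ingredient is the standard fact that Hecke Jacobi algebras at distinct primes commute: since $p \neq q$ and the local algebras $\mathcal{H}_p^{J,2}$, $\mathcal{H}_q^{J,2}$ are built from disjoint sets of double cosets, their elements commute as operators on $J_{k,m}^2$. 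Combining the two ingredients gives $T \cdot \Delta_q = \Delta_q \cdot T$, which finishes the argument.

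The main obstacle I expect is the first step: pinning down the double coset realization of the embedded operators and verifying that they genuinely reside in $\mathcal{H}_q^{J,2}$ (rather than merely commuting with it on a case-by-case Fourier-coefficient basis). This is where the hypothesis that $m$ is either $1$ or a prime enters. Under this assumption, the Fourier coefficients reduce to the three-variable symbols $C(D_1,D_2,D)$ introduced in \eqref{coefficient}, so the action of both $T^J(q)^\uparrow$, $T^J(q)^\downarrow$ and of any $T \in \mathcal{H}_p^{J,2}$ can be computed by closed formulas on these symbols, with no obstruction from the residues $r_1,r_2 \pmod{2m}$ that would otherwise appear in \eqref{coefficientallg}. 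In this setting the commutation of the two actions on Fourier coefficients becomes a direct verification parallel to the computation used in the proof of Theorem \ref{DualityTheorem}, and yields the claim.
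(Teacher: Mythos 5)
Your reduction of invariance to the operator identity $T\cdot\Delta_q=\Delta_q\cdot T$ is a reasonable starting point, but the two ingredients you invoke to establish that identity are exactly the ones that are unavailable here, and the paper is explicit about this: in Section \ref{heckejacobi} it is stressed that the Jacobi group is not reductive, that the Hecke Jacobi algebra is \emph{not} commutative, and that it does \emph{not} decompose into local Hecke algebras. So ``the standard fact that Hecke Jacobi algebras at distinct primes commute'' is not a fact you can cite; the Heisenberg parts of coset representatives at different primes genuinely interact, and commutativity has to be checked by hand. Your first step is also problematic: $T^J(q)^{\uparrow}$ and $T^J(q)^{\downarrow}$ are defined by embedding the \emph{degree-one} coset representatives $\frac{1}{q}M(\lambda,\mu)$ into the degree-two group; the resulting set of matrices is not a union of $\Gamma_2^J$-double cosets, so these operators do not obviously lie in $\mathcal{H}_q^{J,2}$ (which is generated by the degree-two double cosets $T^J(S_i)$ of \eqref{inv}). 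Asserting that they do ``because the elementary divisors are powers of $q$'' does not address this.

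What the paper actually does is the concrete verification you relegate to your final paragraph as a fallback. It takes the explicit representatives $M_1,\dots,M_{12}$ of the generators $S_1,S_2,S_3$ of $\mathcal{H}_p^{J,2}$, composes each with the Heisenberg sum $M(\lambda,\mu)$, computes the induced change of variables on $(N,R)$ and hence on the invariants $(D_1,D_2,D)$ (e.g.\ $D_1\mapsto\frac{1}{p^2}(D_1+2aD+a^2D_2)$ for $M_6$), and then checks directly that the resulting coefficient function $A(D_1,D_2,D)$ satisfies \eqref{Hecke_Duality_Relations} at $q$ whenever $C$ does. The two devices that make this work are (i) choosing the free parameters $a$ in the representatives to be divisible by $q$, so that $\chi_{D_1}(q)$ is unchanged under the substitution, and (ii) reindexing the sums via $a\mapsto aq$, $\lambda\mapsto\lambda\bar q$ using $(p,q)=1$. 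The hypothesis that $m$ is $1$ or prime enters as you say, through Lemma \ref{well_defined_lemma}. So your proposal identifies the right reduction and the right role of the hypothesis on $m$, but the mechanism you propose for the key commutation step would fail, and the computation that actually carries the proof is left undone.
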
 
Next we prove that Jacobi Eisenstein series $E_{k,m}^{J,2}$  (see Section  \ref{EisenSection}) are 
  contained in $\mathbb{E}_{k,m}$. 
  As a by-product we show a  new decomposition of Eisenstein series. 
\begin{theorem} \label{EisenTheorem}  
Assume that $m$ is square-free. Then   we have  for all primes $p$
\begin{equation} \label{HeckeEisen}
E_{k,m}^{J,2}| 
\left(T^J(p)^{\uparrow} -T^J(p)^{\downarrow} \right)=0.
\end{equation}
If $m$ is arbitrary, then (\ref{HeckeEisen}) is also true if $(p,m)=1$. 
\end{theorem}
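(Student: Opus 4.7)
The plan is two-pronged, depending on whether $(p,2m)=1$ or not. For primes $p$ with $(p,2m)=1$, Theorem~\ref{DualityTheorem} reduces the assertion to verifying the Hecke duality relation (\ref{Hecke_Duality_Relations}) for the Fourier coefficients of $E_{k,m}^{J,2}$. For the remaining primes $p\mid 2m$, which are permitted by the first assertion under the square-free hypothesis on $m$, Theorem~\ref{DualityTheorem} does not apply and one has to work directly with the Hecke operators.

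For the unramified primes, I would begin by computing the Fourier coefficients of $E_{k,m}^{J,2}$, unfolding the standard coset sum defining the Eisenstein series. This yields an expression that factors into local pieces at each prime. Plugging the result into (\ref{Hecke_Duality_Relations}) reduces the identity to a single local statement at $p$: the $p$-component of the Fourier coefficient is required to satisfy a symmetry relation that mirrors the antisymmetric combination $\chi_{D_1}(p)-\chi_{D_2}(p)$ on the left-hand side. This local identity is the Jacobi analogue of the one underlying Maass's original relation and can be established by a case analysis on the $p$-adic valuations of $D_1$, $D_2$, and $D$.

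For the ramified primes, one natural approach is to argue via an involution $\sigma$ on the degree-$2$ Jacobi group that exchanges the two ``copies'' of the degree-$1$ Jacobi structure embedded in it. Such a $\sigma$ should fix $E_{k,m}^{J,2}$, since the Eisenstein parabolic is invariant under the exchange, and it should satisfy $\sigma^{-1}T^J(p)^\uparrow \sigma = T^J(p)^\downarrow$, since the two operators are defined by inserting the same degree-$1$ Hecke operator into the respective copies. Combining these facts yields $E_{k,m}^{J,2}\vert T^J(p)^\uparrow = E_{k,m}^{J,2}\vert T^J(p)^\downarrow$, which is the desired identity.

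The main obstacle is reconciling the involution $\sigma$ with the index $m$: the slash action depends on $m$ via a theta-type factor whose behavior under $\sigma$ is governed by the $p$-adic structure of $m$. Square-freeness of $m$ (respectively the coprimality $(p,m)=1$ in the second assertion) is precisely what makes this dependence trivial at $p$, so that the conjugation identity above holds as stated; without such a hypothesis, the intertwining would in general pick up corrections and the two Hecke actions would no longer agree. Tracking the interaction between the exchange involution and the index-$m$ transformation law carefully is the technical heart of the proof.
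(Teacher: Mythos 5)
Your second prong contains the decisive gap. Let $\sigma\in\Gamma_2^J$ be the coordinate swap you have in mind (symplectic part $\sm{0}{1}{1}{0}\times\sm{0}{1}{1}{0}$ suitably embedded, exchanging $\tau\leftrightarrow\zeta$, $z_1\leftrightarrow z_2$). From $E|\sigma=E$ and $\sigma^{-1}\eta_j^{\uparrow}\sigma=\eta_j^{\downarrow}$ for the coset representatives, the cocycle property gives only
$E|T^J(p)^{\uparrow}=\left(E|T^J(p)^{\downarrow}\right)|\sigma$.
To conclude equality you would need $E|T^J(p)^{\downarrow}$ to be $\sigma$-invariant, and there is no reason for that: $T^J(p)^{\downarrow}$ is built from embedded degree-$1$ cosets, not from a full $\Gamma_2^J$-double coset, so its output is not a degree-$2$ Jacobi form and inherits no invariance under $\sigma$. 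Worse, your argument uses nothing about $E_{k,m}^{J,2}$ beyond $\Gamma_2^J$-invariance and nothing about $m$ or $p$, so if it were valid it would prove $\Phi|T^J(p)^{\uparrow}=\Phi|T^J(p)^{\downarrow}$ for every $\Phi\in J_{k,m}^2$ — contradicting the last section of the paper, where the Klingen Eisenstein series attached to a cusp form is shown \emph{not} to be annihilated by $T^J(p)^{\uparrow}-T^J(p)^{\downarrow}$. The swap relation is exactly the statement that $\Phi|\left(T^J(p)^{\uparrow}-T^J(p)^{\downarrow}\right)$ is $\sigma$-\emph{anti}-invariant, which is consistent with its being nonzero.

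What is actually needed is information specific to the Eisenstein series, and this is where the paper goes: it splits $E_{k,m}^{J,2}=E_I+E_{II}$ via Garrett's decomposition of $\Gamma_\infty\backslash\Gamma_2$ under $\Gamma\times\Gamma$. On $E_I$ the operator $T^J(p)^{\uparrow}$ acts through the degree-$1$ operator $T^J(p)$ on the inner degree-$1$ Eisenstein series $E_{k,m}^J$, and one needs $E_{k,m}^J$ to be a Hecke--Jacobi eigenform: automatic for $(p,m)=1$, and for $p\mid m$ precisely under the square-freeness hypothesis. The piece $E_{II}$ is rewritten as $\sum_d d^{-k}\,P_{k,m}^J|\mathbb{X}(d)^{\uparrow}$ for a Poincar\'e-type series built from $\psi_{k,m}$, and there one uses an anti-involution $\#$ of the degree-$1$ Jacobi group with $\psi_{k,m}^{-1}|\gamma^{\uparrow}=\psi_{k,m}^{-1}|(\gamma^{\#})^{\downarrow}$ together with the commutativity $\mathbb{X}(l)\,\mathbb{X}(d)=\mathbb{X}(d)\,\mathbb{X}(l)$, which again holds in general only for $m$ square-free (or $(l,d)=1$ or $(ld,m)=1$). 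So the index hypothesis enters through eigenform and commutativity statements in the Hecke--Jacobi algebra, not through the behaviour of a theta factor under the swap. Your first prong (verifying (\ref{Hecke_Duality_Relations}) directly on Fourier coefficients for $(p,2m)=1$) is plausible but remains a plan — the Fourier coefficients of $E_{k,m}^{J,2}$ are never computed — and in any case it cannot reach $p=2$ or $p\mid m$, which are exactly the primes the theorem is chiefly about.
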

To show that  
$\mathbb{E}_{k,m}$ is not in general  equal to the whole space  of Jacobi forms, 
we give in the last section explicit examples which do not satisfy
the Hecke duality relations. In particular  if   $k \geq 10$ is even, 
 then we have $\mathbb{E}_{k,1} \subsetneqq J_{k,1}^2$.
\section{Basic facts about Automorphic forms}\label{automorphicforms}
In this section we recall some basic facts about automorphic forms with
 respect to the symplectic and  to the Jacobi group.  
 Throughout we let $R$ be a commutative ring.  
The symplectic  group $Sp_n(R)$ acts on the Siegel upper half-space  $\H_n$    by 
\begin{eqnarray*}
g \circ  \Tau := 
(A\Tau+B)(C\Tau+D)^{-1}.
\end{eqnarray*}
If $g= \left(\begin{smallmatrix}A&B\\C&D \end{smallmatrix} \right) \in Sp_n(R)$, $k \in \N_0$, 
 and $\Phi$ is a complex-valued function on $\H_n$, then define 
\begin{eqnarray*}
F|_k g (\Tau) := \left(\det J(g,\Tau)\right)^{-k} F(  g \circ \Tau ),
\end{eqnarray*}
where  $J(g,\Tau) := \left( C \Tau + D \right)$.  
We let   $M_k^n$ be the vector space of Siegel modular forms of weight $k$ and degree $n$ with respect 
to the Siegel modular group $\Gamma_n := Sp_n(\Z)$, i.e., the space of holomorphic functions 
$F:\H_n \to \C$ that satisfy $F|_kg=F$ for all $g \in \Gamma_n$  and that have a Fourier expansion  
\begin{equation*}
F(\Tau) = \sum_S \,\, A(S) \,\, e^{2 \pi i \text{tr} \, ( S \Tau)},
\end{equation*}
where $S$ runs through the set of half-integral semi-definite matrices.

We next consider Jacobi forms. 
The Heisenberg group   of degree $n$
$$
H_n(R):= \{ 
\left(\lambda,\mu,\kappa \right)
\vert \, \lambda, \mu \in R^n \text{ and } \kappa \in R\}
$$ 
has the   group law  
\begin{equation*}
\left(\lambda_1,\mu_1,\kappa_1 \right) +\left(\lambda_2,\mu_2,\kappa_2 \right) = \left(\lambda_1 + \lambda_2,\mu_1+ \mu_2,
\kappa_1 + \kappa_2 + \lambda_1 \mu_2^t - \mu_1 \lambda_2^t
\right).
\end{equation*}
Define the Jacobi group $G_n^J(R):=  Sp_n(R) \ltimes H_n(R)$. 
This  group can be viewed as a subgroup of $Sp_{n+1}(R)$ via the   embedding
$\widehat{\phantom{x}}: G_n^J(R) \longrightarrow Sp_{n+1}(R),$ where $\gamma = (g,h) = \left( \sm{A}{B}{C}{D}, ( \lambda,\mu,\kappa) \right)$
maps to
\begin{eqnarray*}
\widehat{\gamma} :=
\left( \begin{matrix}A&0&B&\mu'\\ \lambda&1&\mu&\kappa\\ C&0&D&-\lambda'\\ 0&0&0&1 \end{matrix}  \right).
\end{eqnarray*}
Here $\lambda',\mu'$ are uniquely determined. 
Moreover we have $\widehat{\gamma} = \widehat{g} \cdot \widehat{h}$.  
The group  $G_n^J(R)$ acts on $\H_n^J$ via 
\begin{equation*}
\gamma \circ ( \Tau,Z):= \left( g\circ \Tau, \left(Z \,  + \lambda \Tau + \mu \right) J^{-1}(g,\Tau)
 \right).
\end{equation*}
We   define the cocycle
$J_{k,m}$ by
\begin{equation*}
J_{k,m}\big((g,h),\Tau,Z\big):= J_{k,m}\big(g,h \circ(\Tau, Z)\big) \cdot J_{k,m}\big(h,(\Tau,Z)\big),
\end{equation*}
where
\begin{eqnarray*}
J_{k,m}\big(g,(\Tau, Z)\big) &:=&    \text{det}\left( J(g,\Tau \right) \,\, 
e\left(m \,\text{tr}    \left( \left( J^{-1}(g,\Tau)C\right)[Z^t]\right)\right),       
\\
J_{k,m}\left(h,(\Tau,Z)\right) 
&:=&
 e\left(-m \, \text{tr} \left( \Tau[\lambda^t] + 2 \lambda Z^t + \kappa + \mu \lambda^t \right)\right)
.
\end{eqnarray*}
Here  $ A [B]:= B^t A B$ for  matrices of suitable sizes and $e(x):=  e^{ 2 \pi i x}$ for  $x \in \C$.
For  all $\gamma_2,\gamma_2 \in G_n^J(\R)$, 
the  cocyle $J_{k,m}$ has  the property
\begin{equation*}
J_{k,m} \big( \gamma_1 \gamma_2, (\Tau,Z) \big) = 
J_{k,m} \big( \gamma_1, \gamma_2 \circ(\Tau,Z) \big)\,\,
J_{k,m} \big( \gamma_2, (\Tau,Z) \big).
\end{equation*}
Define the Petersson slash operator $\vert_{k,m}$ for complex-valued functions 
$F$ on $\H_n^J$ and $\gamma \in G_n^J(\R)$ by 
\begin{equation*}
\Phi \vert_{k,m} \gamma (\Tau,Z) :=
J^{-1}_{k,m}\left(\gamma,(\Tau,Z) \right)\,\, \Phi \left( \gamma \circ (\Tau,Z) \right).
\end{equation*} 
For positive integers   $k,m$, and $n$  we let  $J_{k,m}^n$ be the space of Jacobi forms of degree $n$, weight $k$, and index $m$, i.e., the space of complex-valued functions   $\Phi$ on $\H_n^J$ 
that satisfy  $\Phi\vert_{k,m} \gamma =\Phi$ for $\gamma \in 
\Gamma_n^J := G_n^J(\Z)$ and that have a Fourier expansion   of the form
\begin{equation*}
\Phi \left( \Tau, Z \right) 
= 
\sum_{N,R} 
C(N,R) \,\, e^{ 2 \pi i \left( \text{tr} (N \Tau ) + R Z^t \right)}.
\end{equation*}
Here the sum runs over all   $N$ and $R$ such that $\left(\begin{smallmatrix} N & R^t \\ R & m \end{smallmatrix}\right)$
is a half-integral semi-definite matrix of size $n+1$.   
Examples of  Jacobi forms are given by  Fourier Jacobi coefficients of Siegel modular forms. 
If we   write for $n>1$ an element  of $\H_n$ as 
 $\left( \begin{smallmatrix} \Tau & Z^t \\ Z & \omega_0 \end{smallmatrix}\right)$ with 
  $\omega_0 \in \H$, then $F \in M_k^n$  has a Fourier Jacobi expansion of the form 
\begin{equation*}
F \left( \begin{matrix} \Tau & Z^t \\ Z & \omega_0 \end{matrix}\right) = 
\sum_{m=0}^{\infty} \Phi_m^F \left( \Tau,Z\right) \,\, e(m \,\omega_0),
\end{equation*}
where   $\Phi_m^F \in J_{k,m}^{n-1}$.  
We drop the index $n=1$ to simplify notation.  
For  $\Phi \in J_{k,m}^n$ we define
\begin{equation*}
\widehat{\Phi} 
\left( \begin{matrix} \Tau & Z^t \\ Z & \omega_0 \end{matrix}\right) := 
\Phi \left( \Tau, Z \right) \,\, e(m \, \omega_0),
\end{equation*} 
which has the property 
\begin{equation}\label{transform1}
\Phi \vert_{k,m} \gamma = e(-m \, \omega_{0}) \,\, 
 \widehat{\Phi}\vert_k \widehat{\gamma}.
\end{equation} 
We next embed 
$Sp_n \times Sp_n$ into $Sp_{2n}$   by  
\begin{equation*}
Sp_n \times Sp_n \to  Sp_{2n},\qquad
\left(\begin{smallmatrix}A_1&B_1\\ C_1&D_1 \end{smallmatrix} \right)  
\times \left(\begin{smallmatrix} A_2&B_2\\
C_2& D_2 \end{smallmatrix} \right)
\mapsto \left(\begin{smallmatrix}  
A_1&0 &B_1&0 
\\ 0& A_2 &0& B_2\\ 
C_1&0&D_1&0\\
0&C_2&0&D_2
 \end{smallmatrix} \right)
\end{equation*}
and identify this image with $Sp_n \times Sp_n$. 
Further we embed   $G_n^J \times G_n^J$ into 
$G_{2n}^J$ via  \begin{eqnarray*}
\Big( 
( \lambda_1,\mu_1, \kappa_1),g_1\Big) \times 
\left(  
\left( \lambda_2, \mu_2, \kappa_2\right), g_2   \right)
\mapsto \left(    \left(  
(\lambda_1, \lambda_2),  (\mu_1, \mu_2), (\kappa_1 + \kappa_2) , g_1 \times g_2  ) 
\right)\right).
\end{eqnarray*} 
In the following we use the symbols $\gamma^\uparrow$ and 
$\widetilde{\gamma}^\downarrow$ to indicate the embeddings of $\gamma \times I_2$ and $I_2 \times \widetilde{\gamma}$. 
\section{Hecke Theory and the lifting operator} \label{HeckeSection}
\subsection{The symplectic Hecke algebra} 
Let us first consider the symplectic Hecke algebra  $\mathcal{H}^n$  of  the Hecke pair 
 $\left( \Gamma_n, Sp_n( \Q) \right)$ \cite{An87,Sh71}  which   
  decomposes  as   $\mathcal{H}^n = \otimes_p \, \mathcal{H}_p^n$. 
Here the local Hecke algebra $\mathcal{H}_p^n$ is generated by
$\left( T_i^{(n)}(p) \right)_{(0 \leq i \leq n)}$, where
$$T_i^{(n)}(p) := 
p^{-1}
\, \Gamma_n \, 
\text{diag} \left( 1,\ldots,1,\underbrace{p,\ldots,p}_{i}; p^2,\ldots,p^2,\underbrace{p \ldots,p}_{i}\right) \Gamma_n.$$ 
We need  an explicit left coset decomposition of the generators of the Hecke operators for  $n=1$ and $n=2$.   
Since  $T_n^{(n)}(p) = I_{2n}$, we  can omit the case $i=n$.   

If $n=1$ we    can choose  as a
$\Gamma$-left coset decomposition of $p \,T_0^{(1)}(p)$:
\begin{equation}\label{einsp2}
\Gamma \, N_1  
+
\sum_a 
\Gamma \, N_2(a)  
+
\sum_b
\Gamma \, N_3(b),  
\end{equation}
where $a$ runs through $\left( \Z / p\Z\right)^{\ast}$, $b$ through  $\left( \Z / p^2\Z\right)$, and where  $N_1:=\sm{p^2}{0}{0}{1}$,    $N_2(a):= \sm{p}{a}{0}{p}$, and $N_3(b):= \sm{1}{b}{0}{p^2}$. 

If $n=2$ the generators of the Hecke algebra  are given by: 
\begin{equation*} 
T_0^{(2)} (p)  =  
p^{-1} \Gamma_2 \,
\left( \begin{matrix}1&0&0 &0\\ 0&1&0&0\\ 0&0&p^2&0\\ 0&0&0&p^2 \end{matrix}  \right)\,
\Gamma_2,   \qquad 
T_1^{(2)} (p)  =  
p^{-1} \Gamma_2 \,
\left( \begin{matrix}1&0&0 &0\\ 0&p&0&0\\ 0&0&p^2&0\\ 0&0&0&p \end{matrix}  \right)\,
\Gamma_2,\qquad T_2^{(2)} (p) = \Gamma_2.
\end{equation*}  
Representatives can be choosen  of  the form 
$
p^{-1} \left( \begin{smallmatrix} A & B \\ 0 & D \end{smallmatrix} \right) \in Sp_2 ( \Q),
$ 
with $D = \left(\begin{smallmatrix} * & * \\ 0 & * \end{smallmatrix}\right)$,
$A = p^2 (D^t)^{-1}$, and $D^t \, B = B^t \, D$ (see \cite{An87}). 
Then   $D$ runs through all $\Gamma$-left cosets of
$
\Gamma \backslash \left( \bigcup_{j=1}^6 \Gamma {\mathcal{D}}_j \Gamma \right)
$
with \begin{equation*}
\mathcal{D}_1 = I_2,\quad
\mathcal{D}_2 = \left(\begin{matrix}  1 & 0 \\ 0 & p\end{matrix} \right),\quad
\mathcal{D}_3 = pI_2,\quad
\mathcal{D}_4 =   \left(\begin{matrix}  1 & 0 \\ 0 & p^2 \end{matrix} \right)     ,\quad
\mathcal{D}_5 = p \left(\begin{matrix}  1 & 0 \\ 0 & p\end{matrix} \right),\quad
\mathcal{D}_6 = p^2I_2.
\end{equation*} 
Each of 
the    double cosets related to $\mathcal{D}_1, \mathcal{D}_3$, and $ \mathcal{D}_6$
decompose into one left coset.
From the  decomposition of $\Gamma \backslash \Gamma \mathcal{D}_4 \Gamma$  in (\ref{einsp2}) and the identity
 $p\mathcal{D}_2 = \mathcal{D}_5$, it is sufficient to consider
\begin{equation}
\Gamma \, 
\left(\begin{matrix}  1 & 0 \\ 0 & p\end{matrix} \right)  \,\Gamma  = 
\Gamma \,
\left(\begin{matrix}  p & 0 \\ 0 & 1\end{matrix} \right) + \sum_{ a \!\!\!\!    \pmod{p}}  \Gamma \, 
\left(\begin{matrix}  1 & a \\ 0 & p\end{matrix} \right).
\end{equation}
Next we  calculate the corresponding representatives $M \in Sp_2(\Q)$. 
If a representative $D \in \text{Mat}(2,\Z)$ is fixed,
then $B \in \text{Mat}(2,\Z)$ runs through a set of representatives modulo $D$, i.e.,
$B$ satisfies $D^t \, B = B^t \, D$ and the congruence relation $\sim$. Here
$ B_1 \sim B_2$ if and only if $(B_1 - B_2) D^{-1} \in Mat(2,\Z)$.
Using the algorithm given in \cite{HW} we obtain  
 the following representatives:
 \begin{displaymath}
 \begin{array}{ll}
 M_1(x,y,z):=\left(
\begin{matrix}
p^{-1}&0&\frac{y}{p}&\frac{x}{p}\\
0&p^{-1}&\frac{x}{p}&\frac{z}{p}\\
0&0&p&0\\
0&0&0&p
\end{matrix}
  \right)_{  x,y,z  \!\!\!\!\!  \pmod{p^2}}
  & 
  M_2(s,x):=\left(
\begin{matrix}
p^{-1}&0&\frac{s}{p}&\frac{x}{p}\\
0&1&x&0\\
0&0&p&0\\
0&0&0&1
\end{matrix}
  \right)_{  \substack{ s  \!\!\!\!\!  \pmod{p^2}\\ x  \!\!\!\!\!  \pmod{p}}}\\
M_3:=\left(
\begin{matrix}
p&0&0&0\\
0&p&0&0\\
0&0&p^{-1}&0\\
0&0&0&p^{-1}
\end{matrix}
  \right) &
M_4(a):=\left(
\begin{matrix}
p^{-1}&0&\frac{a}{p}&0\\
0&p&0&0\\
0&0&p&0\\
0&0&0&p^{-1}
\end{matrix}
  \right)_{a \!\!\!\!\!  \pmod{p^2}}\\ 
M_5:=\left(
\begin{matrix}
1&0&0&0\\
0&p&0&0\\
0&0&1&0\\
0&0&0&p^{-1}
\end{matrix}
  \right) &
  M_{6}(a,b):=\left(
\begin{matrix}
p&0&0&0\\
-a&1&0&\frac{b}{p}\\
0&0&p^{-1}&\frac{a}{p}\\
0&0&0&1
\end{matrix}
  \right)_{  \substack{a \!\!\!\!\! \pmod{p} \\b \!\!\!\!\! \pmod p*  }}\\ 
M_7(a,b,c):=\left(
\begin{matrix}
1&0&0&\frac{c}{p}\\
-\frac{a}{p}&1&\frac{c}{p}&\frac{b}{p}\\
0&0&1&\frac{a}{p}\\
0&0&0&1
\end{matrix}
  \right)_{  \substack{ a, b,c  \!\!\!\!\! \pmod{p}\\ a \not\equiv 0  \, (mod \, p)}}&
M_8(a,x):= \left(
\begin{matrix}
p&0&0&0\\
-\frac{a}{p}&\frac{1}{p}&0&\frac{x}{p}\\
0&0&p^{-1}&\frac{a}{p}\\
0&0&0&p
\end{matrix}
  \right)_{ a, x   \!\!\!\!\! \pmod{p^2}}
  \end{array}
  \end{displaymath}
  \begin{displaymath}
  \begin{array}{ll}
M_9(a,x,z,\kappa)
:= \left(
\begin{matrix}
1&0&\frac{x}{p}&z\\
-\frac{a}{p}&\frac{1}{p}&\frac{z}{p}&\frac{\kappa}{p}\\
0&0&1&a\\
0&0&0&p
\end{matrix}
  \right)_{  \substack{ a, z   \!\!\!\!\! \pmod{p}\\ \kappa  \!\!\!\!\! \pmod{p^2}}}&
M_{10}:=I_4\\ 
M_{11}(a):=\left(
\begin{matrix} 
p&0&0&0\\
-a&1&0&0\\
0&0&p^{-1}&\frac{a}{p}\\
0&0&0&1
\end{matrix}
  \right)_{  a \!\!\!\!\! \pmod{p}}& 
M_{12}(a,x,z,\kappa)
:= \left(
\begin{matrix}
1&0&\frac{x}{p}&\frac{xa+zp}{p}\\
-\frac{a}{p}&\frac{1}{p}&\frac{z}{p}&\frac{\kappa}{p}\\
0&0&1&a\\
0&0&0&p
\end{matrix}
  \right)_{ \substack{ \substack{ a, z   \!\!\!\!\! \pmod{p}\\ \kappa  \!\!\!\!\! \pmod{p^2}}  \\ x  \!\!\!\!\! \pmod{p}^* }}.
\end{array}
\end{displaymath}    
\subsection{Hecke Jacobi operators}\label{heckejacobi} 
In the setting of Jacobi forms complications arise since the Jacobi group is   not reductive.
It is well known that the related Hecke Jacobi algebra in not commutative 
and does not  decompose into   local Hecke algebras. 
For our purpose it will   be sufficient to consider double cosets attached to the symplectic part
of the Jacobi group $G_n^J(\Q)$ which includes   the Hecke operators   introduced in \cite{EZ85,Mu89}.

For $l \in \N$ we define
$ \mathbb{X}(l):= \Gamma^J \left(\begin{smallmatrix} l & 0 \\ 
0 & l^{-1} \end{smallmatrix}\right)\Gamma^J$. 
We have $ \mathbb{X}(l_1 l_2)= \mathbb{X}(l_1) \cdot \mathbb{X}(l_2)$ for all positive coprime integers $l_1$ and $l_2$.
Let $\mathcal{H}^J$ be the Hecke Jacobi algebra generated by $\{ \mathbb{X}(l) \vert \, l \in \N\}$ over $\Q$ and
$\mathcal{H}_p^J$ the local Hecke Jacobi algebra generated by $\{ \mathbb{X}(p^n) \vert \, n \in \N_0\}$ over $\Q$.
Then we have $\mathcal{H}^J = \otimes_p \mathcal{H}_p^J.$   
In the following we identify  the Hecke Jacobi algebra
and the related Hecke Jacobi operators.  

For $\Phi \in J_{k,m}$ define the Hecke operator $T^J(\ell)$ as 
\begin{equation}  \label{Heckeop}
\Phi| T^J(\ell):= \ell^{k-4}
\sum_{ \substack{\substack{M \in Mat(2,\Z)  \\ \det (M) = \ell^2  } \\ \text{gcd}(M)= \Box}}
\quad
\sum_{(\lambda, \mu) \in \Z^2 \slash \ell \Z^2 }
\Phi|_{k,m } \left( \frac{1}{\ell} M (\lambda,\mu) \right).
\end{equation} 
Then  $T^J(p)$ is related to  $\mathbb{X}(p)$ by  
$\mathbb{X}(p) = p^{3-k} T^J(p)$.    
Using   (\ref{transform1}),  we can rewrite (\ref{Heckeop}) as
\begin{equation*}
e( -m \,\, \omega_{0}) \,\, 
\ell^{k-4} \!\!  \!\! \!\! \!\!
\sum_{ \substack{\substack{M = \left(\begin{smallmatrix} a & b \\ c & d \end{smallmatrix} \right) 
\in Mat(2,\Z)  \\ \det (M) = \ell^2  } \\ \text{gcd}(M)= \Box}}\,\,\,
\sum_{(\lambda, \mu) \in \Z^2 \slash \ell \Z^2 }
\hat{\Phi }|_k  \left(   \frac{1}{\ell}  \left(\begin{smallmatrix} a&0&b&0\\ 0&1&0&0\\c&0&d&0\\ 0&0&0&1\end{smallmatrix} \right)
\left(\begin{smallmatrix} 1&0&0&\mu\\ \lambda&1&\mu&\kappa\\ 0&0&1&-\lambda\\ 0&0&0&1\end{smallmatrix} \right)   \right).
\end{equation*}
For   $\Phi \in J_{k,m}^{2}$ we introduce the operators $T^J(l)^\uparrow$ and $T^J(l)^\downarrow$:
\begin{eqnarray*}
\Phi \vert T^J(l)^\uparrow &:= &
\frac{e( -m \,\, \omega_{0})}{\ell^{ 4-k}}  \,
\sum_{ \substack{\substack{M = \left(\begin{smallmatrix} a & b \\ c & d \end{smallmatrix} \right) 
\in Mat(2,\Z)  \\ \det (M) = \ell^2  } \\ \text{gcd}(M)= \Box}}  
\sum_{(\lambda, \mu) \in \Z^2 \slash \ell \Z^2 }
\hat{\Phi }|_k 
\left(   \frac{1}{\ell} 
\left(\begin{smallmatrix}
a & 0 & 0 & b & 0 & 0 \\
0 & 1 & 0 & 0 & 0 & 0 \\
0 & 0 & 1 & 0 & 0 & 0 \\
c & 0 & 0 & d & 0 & 0 \\
0 & 0 & 0 & 0 & 1 & 0 \\
0 & 0 & 0 & 0 & 0 & 1 
\end{smallmatrix}\right)
\left(\begin{smallmatrix}
1 & 0 & 0 & 0 & 0 & \mu \\
0 & 1 & 0 & 0 & 0 & 0 \\
\lambda & 0 & 1 & \mu & 0 & 0 \\
0 & 0 & 0 & 1 & 0 & - \lambda \\
0 & 0 & 0 & 0 & 1 & 0 \\
0 & 0 & 0 & 0 & 0 & 1 
\end{smallmatrix}\right)
\right)\label{oben},
\end{eqnarray*} 
\begin{eqnarray*}
\Phi \vert T^J(l)^\downarrow &:= &
\frac{e( -m \,\, \omega_{0})}{\ell^{4-k}} \,  
\sum_{ \substack{\substack{M = \left(\begin{smallmatrix} a & b \\ c & d \end{smallmatrix} \right) 
\in Mat(2.\Z)  \\ \det (M) = \ell^2  } \\ \text{gcd}(M)= \Box}}  
\sum_{(\lambda, \mu) \in \Z^2 \slash \ell \Z^2 }
\hat{\Phi }|_k 
\left(   \frac{1}{\ell} 
\left(\begin{smallmatrix}
1 & 0 & 0 & 0 & 0 & 0 \\
0 & a & 0 & 0 & b & 0 \\
0 & 0 & 1 & 0 & 0 & 0 \\
0 & 0 & 0 & 1 & 0 & 0 \\
0 & c & 0 & 0 & d & 0 \\
0 & 0 & 0 & 0 & 0 & 1 
\end{smallmatrix}\right)
\left(\begin{smallmatrix}
1 & 0 & 0 & 0 & 0 & 0 \\
0 & 1 & 0 & 0 & 0 & \mu \\
0 & \lambda & 1 & 0 & \mu & 0 \\
0 & 0 & 0 & 1 & 0 & 0 \\
0 & 0 & 0 & 0 & 1 & - \lambda \\
0 & 0 & 0 & 0 & 0 & 1 
\end{smallmatrix}\right)
\right).\label{unten}
\end{eqnarray*}  
For $S \in Sp_n(\Q)$ and $\Phi \in J_{k,m}^n$, we define the Hecke Jacobi operators $T_n^J(S)$:
\begin{equation*}
\Phi|T_n^J(S) := \sum_{g \in \Gamma_n \backslash \Gamma_n S \Gamma_n}  \sum_{h \in M(l)} 
\Phi|_{k} gh,
\end{equation*}
where $l$ is the smallest integer such that $lS \in Mat(2n,\Z)$ and $M(l)$ is the set of all $M(\lambda,\mu)$ with  $\lambda,\mu \in \Z^n \slash l \Z^n$. 
We note that $T_1^J \sm{1}{0}{0}{1}$ and $T_1^J\sm{p}{0}{0}{1/p}$ generate $\mathcal{H}_p^J$.
Let further $\mathcal{H}_p^{J,2}$ be the Hecke Jacobi algebra generated by 
$(T^J(S_i))_i$, where
\begin{equation} \label{inv}
 S_1:= I_4, \qquad 
 S_2:= \left( \begin{smallmatrix} 1/p&0&0&0\\0&1/p&0&0\\ 0&0&p&0\\ 0&0&0&p\end{smallmatrix}\right), \qquad 
 S_3:= \left( \begin{smallmatrix} 1/p&0&0&0\\0&1&0&0\\ 0&0&p&0\\ 0&0&0&1\end{smallmatrix}\right).
 \end{equation}
\section{Proof of Theorem \ref{DualityTheorem}} \label{DualitySection} 
Throughout, we let  $\Phi \in  J_{k,m}^2$ with Fourier coefficients $C(N,R)$ and write 
$N = \left( \begin{smallmatrix}
n_{11} & \frac{n_{12}}{2} \\
\frac{n_{12}}{2} & n_{22}
\end{smallmatrix} \right)$  and  $R = (r_1,r_2)$. 
From the transformation law of $\Phi$ one can conclude:
\begin{lemma}\label{well_defined_lemma}
The Fourier coefficients  $C(N,R)$  only depend upon $D_1,D_2,  D$, and the values of $r_1$ and $r_{2}$ modulo $2m$. 
In particular if $m$ is either $1$ or a prime, then they only depend on $D_1$, $D_2$, and $D$.
  \end{lemma}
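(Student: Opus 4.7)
The plan is to deduce the lemma by applying the Jacobi transformation law $\Phi|_{k,m}\gamma = \Phi$ to two judiciously chosen elements of $\Gamma_2^J$: a Heisenberg translation for the general statement, and $-I_4 \in Sp_2(\Z) \hookrightarrow \Gamma_2^J$ for the refinement to $m=1$ or $m$ prime.

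First I would apply the transformation law with $\gamma = (I_4, (\lambda, 0, 0))$ for $\lambda = (\lambda_1, \lambda_2) \in \Z^2$. Unwinding the Jacobi cocycle for Heisenberg elements gives
\begin{equation*}
\Phi(\tau, Z + \lambda\tau) \;=\; e\bigl(-m\,(\tau[\lambda^t] + 2\lambda Z^t)\bigr)\,\Phi(\tau, Z),
\end{equation*}
and substituting the Fourier expansion on both sides, then symmetrizing the non-symmetric correction $\lambda^t R$ against $\tau$ and equating coefficients, yields
\begin{equation*}
C(N, R) \;=\; C\!\left(N + \tfrac{1}{2}(\lambda^t R + R^t \lambda) + m\,\lambda^t \lambda,\; R + 2m\lambda\right).
\end{equation*}
Matrix-theoretically this is $\tilde N \mapsto g^t \tilde N g$ for the unipotent $g = \left(\begin{smallmatrix} I_2 & 0 \\ \lambda & 1 \end{smallmatrix}\right) \in \GL_3(\Z)$, with $\tilde N := \left(\begin{smallmatrix} N & R^t/2 \\ R/2 & m \end{smallmatrix}\right)$. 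A direct calculation shows that the invariants $D_1, D_2, D$ of (\ref{invariants}) are preserved and each $r_i$ is shifted by $2m\lambda_i$. Conversely, given two tuples $(N,R)$ and $(N',R')$ sharing the same $(D_1,D_2,D)$ and the same residues $r_1, r_2 \bmod 2m$, the equation $R' - R = 2m\lambda$ determines $\lambda \in \Z^2$, after which the formulas $n_{11} = (r_1^2 - D_1)/4m$, $n_{22} = (r_2^2-D_2)/4m$, $n_{12} = (r_1 r_2 - D)/2m$ force $N'$ to be exactly the required shift of $N$. This proves the first assertion.

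For $m = 1$ or $m = p$ prime I would additionally apply the transformation law with the central element $-I_4 \in Sp_2(\Z) \hookrightarrow \Gamma_2^J$. This acts as $(\tau, Z)\mapsto (\tau, -Z)$ with trivial automorphy factor (since $\det(-I_2)^k = 1$ and the Heisenberg contribution is absent), so comparing Fourier expansions gives the extra symmetry $C(N,R) = C(N,-R)$. It preserves $(D_1, D_2, D)$ and flips $(r_1, r_2) \bmod 2m$ simultaneously. An elementary case-analysis of the constraints $r_i^2 \equiv D_i \pmod{4m}$ and $r_1 r_2 \equiv D \pmod{2m}$ then shows that $(r_1, r_2) \bmod 2m$ is determined up to a common sign change by $(D_1, D_2, D)$: for $m = 1$ the parity of each $r_i$ is forced by $D_i \bmod 4$; for $m = p$ an odd prime, $r_i$ mod $p$ is determined up to sign (uniquely when $p \mid D_i$) and the congruence $r_1 r_2 \equiv D \pmod p$ pairs the remaining sign ambiguities; the residue class mod $2$ and the case $m = 2$ are small direct checks. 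Combined with part one, this yields the second assertion.

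The main obstacle I anticipate is the bookkeeping in step one, namely getting from the Fourier comparison to the clean matrix identity $\tilde N' = g^t \tilde N g$: the raw correction $\lambda^t R$ is not symmetric, so one has to symmetrize it against $\tau$ (exploiting that Fourier indices may be taken symmetric) before reading off the shift. Once that identity is in hand, invariance of the relevant minors and the mod-$2m$ case analysis are routine.
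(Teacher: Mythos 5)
Your argument is correct and is exactly the route the paper intends: the paper offers no written proof beyond the phrase ``from the transformation law of $\Phi$ one can conclude,'' and your two steps (the Heisenberg translation $(\lambda,0,0)$ giving $C(N,R)=C\bigl(N+\tfrac12(\lambda^tR+R^t\lambda)+m\lambda^t\lambda,\,R+2m\lambda\bigr)$ with $D_1,D_2,D$ invariant, plus $-I_4$ giving $C(N,R)=C(N,-R)$ with trivial automorphy factor since $\det(-I_2)=1$ in degree $2$) supply precisely the missing details, including the correct observation that for $m=1$ or $m$ prime the congruences $r_i^2\equiv D_i\pmod{4m}$ and $r_1r_2\equiv D\pmod{2m}$ pin down $(r_1,r_2)$ modulo $2m$ up to simultaneous sign.
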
   
   \noindent
\textit{Two remarks.} 

\noindent 1)   
The invariants $D_1,D_2$, and $D$   are natural since in the case that  
 $m$ is $1$ or a prime, Lemma  \ref{well_defined_lemma}  is 
equivalent  to the fact that the coefficient $C(N,R)$ only depends on $ (4 N - R^t \cdot R)$.

\noindent 2)   For fixed $D_1,D_2,r_1$ and $r_2$ there  exist only 
finitely many Fourier coefficients $C(N,R)$.

 Using Lemma \ref{well_defined_lemma},   we can rewrite the Fourier expansion of $\Phi$ using the following  theta decomposition
\begin{equation} \label{thetadeco}
\Phi(\Tau,Z)
= \sum_{r_1,r_2 \pmod{2m}} \Theta_{r_1,r_2}(\Tau,Z) \, g_{r_1,r_2}(\Tau),
\end{equation}
where 
\begin{eqnarray*}
\Theta_{r_1,r_2}(\Tau,Z)&:=&
\sum_{\substack{\lambda_1,\lambda_2 \in \Z \\  \lambda_i \equiv r_i \pmod{2m} }}
e^{2 \pi i \left(\frac{\lambda_1^2}{4m}\tau +\frac{\lambda_2^2}{4m} \zeta + \frac{\lambda_1\lambda_2}{2m} u + \lambda_1 z_1+ \lambda_2 z_2 \right)   },\\
g_{r_1,r_2}(\Tau)&:=&
\sum_{D_1,D_2,D} C_{r_1,r_2}(D_1,D_2,D)\,
e^{ 2 \pi i \left( -\frac{D_1}{4m}\tau - \frac{D_2}{4m}\zeta -\frac{D}{2m}u\right) }.
\end{eqnarray*}  
Throughout we write  
$\Tau= \sm{\tau}{u}{u}{\zeta}$ and $Z=(z_1,z_2)$.
We note that the involved theta series are linear independent (see Section 3 of \cite{Zi89}).
\begin{proof}[Proof of Theorem   \ref{DualityTheorem}] 
We start with the $\Gamma$-left coset decomposition of $\Gamma \sm{1}{0}{0}{p^2}\Gamma$ 
stated in (\ref{einsp2}).  
We first consider $N_1$ and define 
\begin{equation*}
\Phi_1:= \sum_{\lambda,\mu \pmod p} \Phi|_{k,m} \left(\frac{N_1}{p},(\lambda,\mu) \right)^{\uparrow}.
\end{equation*}  
One  computes that 
\begin{eqnarray*}
\Phi_1(\Tau,Z)=\sum_{\lambda,\mu \pmod p}
 \sum_{N,R}  C(N,R)\,
e^{ 2 \pi i \left( \left(n_{11} p^2 + r_{1} \lambda p +  m \lambda^2\right)\tau + 
\left(n_{12} p + r_{2}\lambda   \right) u +   (  pr_{1} + 2 m \lambda) z_1 + r_{2} z_2 + n_{22} \zeta  \right)   }.
\end{eqnarray*} 
We make the change of variables 
 \begin{equation*}
 n_{11}' := p^2 n_{11} + p \lambda r_{1}   + \lambda^2m ,\quad
 n_{12}' :=  p n_{12} +  \lambda r_2,\quad
 n_{22}':= n_{22},\quad
 r_{1}':= p r_{1} + 2 \lambda m, \quad
 r_{2}':= r_{2}.
\end{equation*}
Since  $(p,2m)=1$  the condition $n_{11},n_{12}, n_{22}, r_1,r_2 \in \Z$  is equivalent to  
$r_1' \equiv 2 \lambda m  \pmod p$,  $D_1' \equiv 0 \pmod{p^2}$, and $D' \equiv 0 \pmod p$.
Then  we obtain the   new invariants 
$D_1'=  p^2 D_1$, $D_2'=D_2$,  $D' = pD$, $r_1'=pr_1$,  and $r_2'=r_2$. 
This yields    independent of $\lambda$ and $\mu$
\begin{eqnarray*}
C_{r_1,r_2} (D_1,D_2,D) 
= C_{\bar p r_1',r_2'} \left( \frac{D_1'}{p^2},D_2',\frac{D'}{p}\right).
\end{eqnarray*} 
Therefore we obtain  
\begin{multline*}
\Phi_1(\Tau,Z)
= p^{k+1}
\sum_{\lambda \pmod p}\quad
\sum_{ \substack{ r_1,r_2,n_{11},n_{12},n_{22} \in \Z\\ r_1\equiv 2 \lambda m \pmod p}}
C_{\bar p r_1,r_2} \left( \frac{D_1}{p^2},D_2,\frac{D}{p}\right)
e^{2 \pi i  \left( \text{tr} \left(N \Tau\right)+R Z^t \right)}\\
=
p^{k+1} 
\sum_{r_1, r_2 \pmod{2m}} \Theta_{r_1,r_2} (\Tau,Z) 
\sum_{D_1,D,D_2}  C_{\bar p r_1,r_2} \left( \frac{D_1}{p^2},D_2,\frac{D}{p}\right)
e^{ 2 \pi i \left( -\frac{D_1}{4m}\tau - \frac{D_2}{4m}\zeta -\frac{D}{2m}u\right) }.
\end{multline*} 
\noindent
We next consider $N_2(a)$, and define
\begin{equation*}
\Phi_2:= \sum_{\substack{\lambda,\mu \pmod p\\ a \pmod{p}^* }} \Phi|_{k,m} \left(\frac{N_2(a)}{p},(\lambda,\mu) \right)^{\uparrow}.
\end{equation*} 
One can prove that  
\begin{eqnarray*}
\Phi_2(\Tau,Z)=
\sum_{\substack{\lambda,\mu \pmod p\\ a \pmod{p}^*    }} \quad
 \sum_{N,R}  C(N,R)\, e^{   \frac{2 \pi i a n_{11}}{p}}\,
e^{ 2 \pi i \left( \left( n_{11} + \lambda r_{1}  + \lambda^2 m\right)\tau + 
\left(n_{12}  + \lambda  r_2 \right) u +   (  r_{1} + 2  \lambda m) z_1 + r_{2} z_2 + n_{22} \zeta  \right)   }.
\end{eqnarray*}
We  consider $\Phi^*_2$ which arises from $\Phi_2$ by completing  the sum over $a$ into a sum over $a$ modulo $p$. The new sum   over $a$ vanishes unless $p|n_{11}$ in which case it equals $p$.   
   We make the change of variables 
 \begin{eqnarray*}
 n_{11}' &:=& n_{11} + \lambda r_{1}  +\lambda^2 m ,\quad
 n_{12}' :=  n_{12} +  \lambda r_2,\quad
 n_{22}':= n_{22}, \quad
 r_{1}':= r_{1} + 2 \lambda m, \quad
 r_{2}':= r_{2}.
\end{eqnarray*}
The new invariants are  $D_1'= D_1, D_2'=D_2, 
D'=   D,
r_1' =r_1,$ and $r_2' =r_2$.
The condition  $r_1,r_2,n_{11},n_{12}, n_{22} \in \Z$ is equivalent to  
$r_1',r_2',n_{11}',n_{12}' ,  n_{22}'  \in \Z$.    
Moreover the  congruence $n_{11} \equiv 0 \pmod p$ is equivalent to the congruence
\begin{equation}  \label{congruencedisc}
4m^2 \left(\lambda - \overline{2m}r_1' \right)^2 \equiv D_1' \pmod p.
\end{equation} 
The number of solutions $\lambda$ of the congruence (\ref{congruencedisc})  equals
 $1 + \chi_{D_1}(p)$. 
  Hence  
\begin{eqnarray*}
C_{r_1,r_2} (D_1,D_2,D) 
= C_{r_1',r_2'} \left(D_1',D_2',D'\right)
\end{eqnarray*} 
and  
\begin{equation*}
\Phi_2^*(\Tau,Z)
= p^{2}  
\sum_{r_1, r_2 \pmod{2m}} \Theta_{r_1,r_2} (\Tau,Z) 
\sum_{D_1,D,D_2}  
\chi_{D_1}(p)
C_{r_1,r_2} \left( D_1,D_2,D\right) 
e^{ 2 \pi i \left( -\frac{D_1}{4m}\tau - \frac{D_2}{4m}\zeta -\frac{D}{2m}u\right) }.
\end{equation*}
\noindent
We next consider $N_3(b)$, and set
\begin{equation*}
\Phi_3:= \sum_{\substack{\lambda,\mu \pmod p\\ b \pmod{p^2} }} \Phi|_{k,m} \left(\frac{N_3(b)}{p},(\lambda,\mu) \right)^{\uparrow}.
\end{equation*}
One can show that  $\Phi_3$ equals
\begin{multline*}
\sum_{\substack{\lambda,\mu \pmod p\\ b \pmod{p^2}  }}
 \sum_{N,R}  C(N,R)\, e^{2 \pi i \left(   \frac{ b n_{11}}{p^2} +\frac{\mu r_1}{p} \right)}
e^{ 2 \pi i \left( \left( \frac{n_{11}}{p^2} + \frac{\lambda r_{1}}{p}  + \lambda^2 m\right)\tau + 
\left(\frac{n_{12}}{p}  + \lambda  r_2 \right) u +   \left(  \frac{r_{1}}{p} + 2  \lambda m\right) z_1 + r_{2} z_2 + n_{22} \zeta  \right)   }.
\end{multline*}
The   sum over  $b$ vanishes 
unless $p^2 |n_{11}$ in which case it equals $p^2$. Moreover the sum over $\mu$ vanishes unless $p|r_1$ in which case it equals $p$.  
   We make the change of variables 
 \begin{eqnarray*}
 n_{11}' := \frac{n_{11}}{p^2} +\frac{\lambda r_{1}}{p}  +\lambda^2 m,\quad
 n_{12}' :=  \frac{n_{12}}{p} + \lambda r_2,\quad
 n_{22'}:= n_{22},\quad
 r_{1}':= \frac{r_{1}}{p} + 2  \lambda m, \quad
 r_{2}':= r_{2}.
\end{eqnarray*} 
The restrictions $p^2|n_{11}, p|r_1$, and $r_2,n_{12},n_{22} \in \Z$ are 
equivalent to $n_{11}',n_{12}',n_{22}',r_1', r_2' \in \Z$. 
In particular $r_1'$ runs through $\Z$ for each $\lambda$. 
We obtain the invariants
$D_1'=  \frac{D_1}{p^2}$,  $D_2'=D_2$, 
$D'=   \frac{D}{p}$,  $r_1' =\overline{p}r_1$,  and 
$r_2' =r_2$. 
Thus   
\begin{eqnarray*}
C_{r_1,r_2} (D_1,D_2,D) 
= C_{pr_1',r_2'} \left(p^2D_1',D_2',pD'\right).
\end{eqnarray*}   
This yields 
\begin{equation*}
\Phi_3(\Tau,Z)
= p^{4-k}  
\sum_{r_1, r_2 \!\!\! \pmod{2m}} \!\!\!\!  \Theta_{r_1,r_2} (\Tau,Z) 
\sum_{D_1,D,D_2}  
C_{pr_1,r_2} \left( p^2D_1,D_2,pD\right)  
e^{ 2 \pi i \left( -\frac{D_1}{4m}\tau - \frac{D_2}{4m}\zeta -\frac{D}{2m}u\right) }.
\end{equation*} 
In a similar manner we treat   $\Phi\vert_{k,m} \left( T^J(p)\right)^{\downarrow}$. 
Now the claim of the theorem follows by comparing Fourier coefficients 
and by using  the linear independence of the theta series $\Theta_{r_1,r_2}$.
\end{proof}
\section{Proof of Theorem \ref{HeckinvTheorem}} \label{InvSection} 
Throughout we let  $\Phi \in \mathbb{E}_{k,m}^{(q)}$.
We show that for all  $S_i$ as defined in (\ref{inv}) we have  
 $\Phi|T_2^J(S_i) \in \mathbb{E}_{k,m}^{(q)}$.
 We actually  show   that 
each  package of representatives  $M_i$  which correspond to one 
of the $S_i$ already preserves the  Hecke duality. 
As a by-product we  explicitly  determine this action on the Fourier coefficients of $\Phi$.   

We first consider  the action of the Heisenberg group  on $\Phi|_{k,m}g$ with $g \in \Gamma_2\backslash \Gamma_2 S \Gamma_2$.  
For this we define $M:= \sum_{\lambda,\mu \pmod p}M(\lambda,\mu)$, where   
$\lambda:= (\lambda_1,\lambda_2), \mu:= (\mu_1,\mu_2)$, and where  
\begin{eqnarray*}
M(\lambda,\mu):= \left(
\begin{matrix}
1&0&0&0&0&\mu_1\\
0&1&0&0&0&\mu_2\\
\lambda_1&\lambda_2&1&\mu_1&\mu_2&0\\
0&0&0&1&0&-\lambda_1\\
0&0&0&0&1&-\lambda_2\\
0&0&0&0&0&1
\end{matrix}
  \right).
\end{eqnarray*}
Then   $M(\lambda,\mu) \circ \mat{\Tau}{Z^t}{Z}{w_0}$ equals 
 \begin{multline*}
 \matt{\tau}{u}{\lambda_1 \tau+\lambda_2 u+  z_1}{u}{\zeta}{\lambda_1 u +\lambda_2 \zeta+ z_2}{  \lambda_1 \tau+\lambda_2 u+  z_1   }{\lambda_1 u + \lambda_2 \zeta+z_2}
 {\lambda_1^2 \tau + 2 \lambda_1 \lambda_2 u + 2 \lambda_1 z_1 + 2\lambda_2 z_2 + \lambda_2^2 \zeta+ w_0}\\
  + \matt{0}{0}{\mu_1}{0}{0}{\mu_2}{\mu_1}{\mu_2}{\lambda_1 \mu_1 + \lambda_2 \mu_2}.
 \end{multline*}  
 It  will turn out that $\Phi|_{k,m}g$ has a Fourier expansion with  $n_{11}, n_{12},n_{22}\in \Z$ and $ r_1,  r_2 \in \frac{1}{p} \Z$. 
 The sum over $\mu$ vanishes unless $p|r_1,r_2$ in which case it equals $p^2$.    
  We  make the change of variables 
 \begin{multline*}
 n_{11}':= n_{11} + \lambda_1 r_1 +  \lambda_1^2 m,\qquad
 n_{12}':=n_{12} + \lambda_2 r_1 + \lambda_1 r_2 + 2 \lambda_1 \lambda_2m,\\
 n_{22}':=n_{22} + \lambda_2r_2 + \lambda_2^2 m,\qquad
 r_1':= r_1 + 2 \lambda_1 m,\qquad
 r_2':= r_2 + 2 \lambda_2m,
 \end{multline*}
 which doesn't change the associated invariants.  
 
  We next consider the action of the matrices $M_i$ ($1 \leq i \leq 12$). 
 We start with  \\
 $M _1:= \sum_{x,y,z \pmod{p^2}} M_1(x,y,z)$.  Then
 $$
   M_1(x,y,z) \circ \mat{\Tau}{Z^t}{Z}{w_0}
   = \matt{\frac{\tau}{p^2}}{\frac{u}{p^2}}{\frac{z_1}{p}}{\frac{u}{p^2}}{\frac{\zeta}{p^2}}{\frac{z_2}{p}}{\frac{z_1}{p}}{\frac{z_2}{p}}{w_0}
   + \matt{\frac{y}{p^2}}{\frac{x}{p^2}}{0}{\frac{x}{p^2}}{\frac{z}{p^2}}{0}{0}{0}{0}.
  $$
  The  sum over $x,y$, and $z$ vanishes unless $p^2|n_{11}, n_{12}$, and $n_{22}$ in which case it equals  $p^6$. We  make the change of variables 
   $n_{11}':=\frac{n_{11}}{p^2}, \, n_{12}':= \frac{n_{12}}{p^2},\,
  n_{22}':=\frac{n_{22}}{p^2},\, 
  r_1':=\frac{r_1}{p},$ and 
  $ r_2':=\frac{r_2}{p}.$  
  Observe that $n_{11}', n_{12}'$, and $n_{22}'\in \Z$ and $r_1',r_2'\in \frac{1}{p} \Z$. 
  We obtain the invariants 
   $D_1'=\frac{D_1}{p^2},\, 
  D_2'=\frac{D_2}{p^2},$
  $ D'=\frac{D}{p^2}$ 
    which yields
  \begin{eqnarray*}
C(D_1,D_2,D) 
= C\left(p^2D_1',p^2D_2',p^2D'\right).
\end{eqnarray*}
From the above  considerations we see that   applying the  Heisenberg part reduces the summation to $r_1$ and $r_2 \in \Z$, multiplies the sum by $p^2$, and leaves   the invariants unchanged.  
Thus  $\Phi|_{k,m}M_1M(\Tau,Z)$ equals
\begin{equation*}  
p^{10-2k}  
\sum_{\substack{D_1,D,D_2\\ r_1,r_2}}  
 C  \left(p^2D_1,p^2D_2,p^2D\right)
e^{ 2 \pi i \left( -\frac{D_1}{4m}\tau - \frac{D_2}{4m}\zeta -\frac{D}{2m}u\right) } 
e^{2 \pi i \left(\frac{r_1^2}{4m}\tau +\frac{r_2^2}{4m} \zeta + \frac{r_1 r_2}{2m} u + r_1 z_1+ r_2 z_2 \right)   },
\end{equation*} 
where here in the following  we have as before 
$D_1=r_1^2-4n_{11}m$, $D_2=r_2^2-4n_{22}m$, and \\
$D=r_1r_2-2n_{12}^2$.
Let  
\begin{equation*}
A(D_1,D_2,D):= 
 C  \left(p^2D_1,p^2D_2,p^2D\right).
\end{equation*}
We show that this function satisfies (\ref{Hecke_Duality_Relations}) using  that $\Phi$ satisfies 
(\ref{Hecke_Duality_Relations}) and that  $\chi_{D_1}(q) = \chi_{p^2 D_1}(q)$. This yields that  
$\left( \chi_{{ D_1}}(q) -\chi_{D_2}(q) \right) \,\, 
A \left( D_1,  D_2,D  \right)$ equals 
\begin{multline*} 
 \left( \chi_{{ D_1p^2}}(q) -\chi_{D_2p^2}(q) \right) \,\, 
C \left( p^2D_1, p^2 D_2,p^2D  \right) \\
=
q^{2-k}\, \left( C\left( p^2D_1,p^2q^2D_2 ,  p^2 qD  \right) - 	
C \left( q^2 p^2D_1 ,  p^2D_2,p^2qD   \right) \right)\\
+
q^{k-1} \left( C \left(p^2D_1,  \frac{p^2D_2}{q^2},  \frac{p^2D}{q}  \right)
-
C\left( \frac{p^2D_1}{q^2}, p^2D_2, \frac{p^2D}{q} \right) \right)\\
= q^{2-k}\, \left( A\left( D_1,q^2D_2 ,  qD  \right) - 	A \left( q^2 D_1 , D_2,qD   \right) \right)\\
+
q^{k-1} \left( A \left(D_1,  \frac{D_2}{q^2},  \frac{D}{q}  \right)
-
A\left( \frac{D_1}{q^2}, D_2, \frac{D}{q} \right) \right)
\end{multline*}
as claimed.
The matrices $M_2,\, M_3,\, M_4$, and $M_5$ are treated in a similar way.
We next  consider     $M_6:= \sum_{\substack{a \pmod p\\ b \pmod{p}^*}} M_6(a,b)$. We have
  $$
   M_6(a,b) \circ \mat{\Tau}{Z^t}{Z}{w_0}
   = 
   \matt{p^2 \tau}{- a p \tau + pu}{pz_1}{  - a p \tau + pu}{a^2 \tau-2 au + \zeta}{- a z_1+z_2}{pz_1}{- a z_1+z_2}{w_0}
   + \matt{0}{0}{0}{0}{\frac{b}{p}}{0}{0}{0}{0}.
  $$
  Since we will see later that $M_{11}$ preserves  (\ref{Hecke_Duality_Relations}) we may complete the sum over $b$ into a sum over all $b$ modulo $p$ which we denote by $M_6^*$.   The sum over $b$  vanishes unless $p|n_{22}$ in which case it equals  $p$.   
  We  make the change of variables  
  \begin{eqnarray*}
  n_{11}':=    p^2 n_{11} - a p n_{12} + a^2 n_{22},\quad
  n_{12}':=p n_{12} -2a n_{22},\quad
  n_{22}':=n_{22},\quad
  r_1':=p r_1- a r_2,\quad
  r_2':=r_2.
  \end{eqnarray*}  
  We compute the invariants  
  \begin{eqnarray*}
  D_1=\frac{1}{p^2} \left(D_1'+2 a D' + a^2 D_2' \right)   ,\quad
  D_2=D_2',\quad
  D=\frac{1}{p} \left(D' + a D_2' \right),\quad
  r_1 = \frac{1}{p} \left(r_1'+ a r_2' \right)  ,\quad
  r_2 =r_2'.
  \end{eqnarray*} 
  Applying  the Heisenberg transformation preserves those invariants. 
  We denote the new variables with tildes.  
  We have   the following equivalent conditions:
  \begin{eqnarray*}
  p|n_{22} \quad
  \Leftrightarrow \quad p|n_{22}'  \quad
 \Leftrightarrow \quad \widetilde{n}_{22} - \lambda_2 \widetilde{r}_2 + \lambda_2^2 m \equiv 0 \pmod p\quad
 \Leftrightarrow  \quad
  4m^2 \left( \lambda - \overline{2m} \widetilde{r}_2\right)^2 
  \equiv \widetilde{D}_2 \pmod p .
  \end{eqnarray*}  
   Then    $\Phi|_{k,m}M_6^*M(\Tau,Z)$ equals
\begin{multline*}  
p^{4+k}
 \sum_{\substack{D_1,D,D_2\\r_1,r_2}}  
\sum_{  \substack{ \lambda_2, a \pmod p\\   4m^2 \left( \lambda_2 - \overline{2m} r_2\right)^2 
  \equiv D_2 \pmod p  } }  
  C \left(  \frac{1}{p^2} (D_1+ 2 a D + a^2 D_2), D_2, \frac{1}{p} (D+ a D_2)\right)\\
  e^{ 2 \pi i \left( -\frac{D_1}{4m}\tau - \frac{D_2}{4m}\zeta -\frac{D}{2m}u\right) } 
  e^{2 \pi i \left(\frac{r_1^2}{4m}\tau +\frac{r_2^2}{4m} \zeta + \frac{r_1 r_2}{2m} u + r_1 z_1+ r_2 z_2 \right)   }
  e^{2 \pi i \left(\frac{r_1^2}{4m}\tau +\frac{r_2^2}{4m} \zeta + \frac{r_1 r_2}{2m} u + r_1 z_1+ r_2 z_2 \right)   }.
\end{multline*}  
We let
\begin{equation} \label{defineA}
A(D_1,D_2,D):= 
\sum_{  \substack{ \lambda, a \pmod p\\   4m^2 \left( \lambda - \overline{2m} r_2\right)^2 
  \equiv D_2 \pmod p  } }  
  C \left(  \frac{1}{p^2} (D_1+ 2 a D + a^2 D_2), D_2, \frac{1}{p} (D+ a D_2)\right)
\end{equation}
and show that this function satisfies (\ref{Hecke_Duality_Relations}). 
First observe that in the sum over $a$ in  (\ref{defineA}) we may choose as   a set of representatives    elements that are   divisible by $q$.    
Using this  we have 
$$
\chi_{D_1}(q) =  \chi_{   \frac{1}{p^2} (D_1+ 2 a D + a^2 D_2)      }(q).
$$ 
This   yields that   $\left( \chi_{D_1}(q) - \chi_{D_2}(q) \right)A(D_1,D_2,D)$ equals
\begin{multline*}
\left( \chi_{D_1}(q) - \chi_{D_2}(q) \right)
 \sum_{  \substack{ \lambda, a \pmod p\\   4m^2 \left( \lambda - \overline{2m} r_2\right)^2 
  \equiv D_2 \pmod p  } }  
  C \left(  \frac{1}{p^2} (D_1+ 2 a D + a^2 D_2), D_2, \frac{1}{p} (D+ a D_2)\right)
  \\
=
 \sum_{  \substack{ \lambda, a \pmod p\\   \substack{4m^2 \left( \lambda - \overline{2m} r_2\right)^2 
  \equiv D_2 \pmod p \\ q|a  }} }  
   \left( \chi_{   \frac{1}{p^2} (D_1+ 2 a D + a^2 D_2)      }(q) - \chi_{D_2}(q) \right)\\
  C \left(  \frac{1}{p^2} (D_1+ 2 a D + a^2 D_2), D_2, \frac{1}{p} (D+ a D_2)\right).
\end{multline*}
Using (\ref{Hecke_Duality_Relations})   gives
that  this   equals   
\begin{multline*}
q^{2-k} 
 \sum_{  \substack{ \lambda, a \pmod p\\   \substack{4m^2 \left( \lambda - \overline{2m} r_2\right)^2 
  \equiv D_2 \pmod p \\ q|a  }} } 
  C \left(  \frac{1}{p^2} (D_1+ 2 a D + a^2 D_2), q^2D_2, \frac{q}{p} (D+ a D_2)\right) \\
  -  
  q^{2-k} 
 \sum_{  \substack{ \lambda, a \pmod p\\   \substack{4m^2 \left( \lambda - \overline{2m} r_2\right)^2 
  \equiv D_2 \pmod p \\ q|a  }} }   C \left(  \frac{q^2}{p^2} (D_1+ 2 a D + a^2 D_2), D_2, \frac{q}{p} (D+ a D_2)\right)
  \\
  +
  q^{k-1} 
 \sum_{  \substack{ \lambda, a \pmod p\\   \substack{4m^2 \left( \lambda - \overline{2m} r_2\right)^2 
  \equiv D_2 \pmod p \\ q|a  }} } 
  C \left(  \frac{1}{p^2} (D_1+ 2 a D + a^2 D_2), \frac{D_2}{q^2}, \frac{1}{pq} (D+ a D_2)\right)  \\ 
  -  
    q^{k-1} 
 \sum_{  \substack{ \lambda, a \pmod p\\   \substack{4m^2 \left( \lambda - \overline{2m} r_2\right)^2 
  \equiv D_2 \pmod p \\ q|a  }} }   C \left(  \frac{1}{p^2q^2} (D_1+ 2 a D + a^2 D_2), D_2, \frac{1}{pq} (D+ a D_2)\right)
  .
\end{multline*} 
We  rewrite the occuring summands on the right hand side. 
Since $(p,q)=1$, we may change in the first sum $a$ into $aq$ and in the second $a$ into $aq$ and $\lambda$ into $\lambda \bar{q}$. The other summands are  treated similarly.  
  This gives that  $\Big( \chi_{{ D_1}}(q) -\chi_{D_2}(q) \Big) \,\, A \left( D_1,  D_2,D  \right)$ equals
  \begin{equation*}   
q^{2-k}\, \left( A \left( D_1,D_2 q^2,  D q \right) - A \left( D_1 q^2,  D_2,D q  \right) \right)
+
q^{k-1} \left( A\left(D_1,  \frac{D_2}{q^2},  \frac{D}{q}  \right)
-
A \left( \frac{D_1}{q^2}, D_2, \frac{D}{q} \right) \right)
\end{equation*}
as claimed.

 We next  deal with  the action of      
$M_7:= \sum_{ \substack{   b,c \pmod p\\ a \pmod p^*     }} M_7(a,b,c)$. We have 
  $$
   M_7(a,b,c) \circ \mat{\Tau}{Z^t}{Z}{w_0}
   =  
   \matt{\tau}{u - \frac{a}{p} \tau}{z_1}{u-\frac{a}{p}\tau }{\frac{a^2}{p^2} \tau - 2 \frac{a}{p} u + \zeta}{z_2 -\frac{a}{p} z_1 }{z_1}{z_2 -\frac{a}{p} z_1}{w_0}
      + \matt{0}{\frac{c}{p}}{0}{\frac{c}{p}}{\frac{bp-ca}{p^2} }{0}{0}{0}{0}.
  $$
  The sum over $b$ vanishes unless
  $n_{22} \equiv 0 \pmod p$. 
  Moreover the sum over $c$ vanishes unless \\ 
  $n_{12} -\frac{a n_{22}}{p}  \equiv 0 \pmod p$. 
     We make the change of variables
  \begin{multline*}
  n_{11}':=    n_{11} - \frac{a}{ p} n_{12} + \frac{a^2}{p^2} n_{22},\quad
  n_{12}':= n_{12} -2\frac{a}{p} n_{22}, \quad
  n_{22}':=n_{22},\quad
  r_1':=r_1- \frac{a}{p} r_2,\quad
  r_2':=r_2.
  \end{multline*}   
  Then  
  \begin{equation*}
  D_1=  D_1'+\frac{2a}{p}D' + \frac{a^2}{p^2}   D_2' ,\qquad
  D_2=D_2',\qquad
  D= D'+\frac{a}{p}D_2' .
    \end{equation*} 
    We  again denote the variables after  the Heisenberg transformation with tildes.
    As before we see that the condition $n_{22} \equiv 0 \pmod p$ is equivalent to  
    \begin{equation}\label{congruence1}
    4m^2 \left( \lambda_2 - \overline{2m} \widetilde{r}_2\right)^2 
  \equiv \widetilde{D}_2 \pmod p .
    \end{equation}
    Similarly we have  the equivalence 
      \begin{equation*}
  n_{12} -\frac{an_{22}}{p}  \equiv 0 \pmod p \quad \Leftrightarrow \quad  
  n_{12}' +\frac{a}{p}n_{22}' \equiv 0 \pmod p,
  \end{equation*} 
  which is  equivalent to 
  \begin{equation} \label{congruence2}
 \widetilde{n}_{12} - \lambda_2 \widetilde{r}_1 - \lambda_1 \widetilde{r}_2 + 2 \lambda_1 \lambda_2 m + \frac{a}{p}  \left(\widetilde{n}_{22} - \lambda_2 \widetilde{r}_2 + \lambda_2^2m\right) \equiv 0 \pmod p.
  \end{equation} 
  This yields    that $\Phi|_{k,m}M_7M(\Tau,Z) $ equals
  \begin{multline*}
p^{2} 
\sum_{\substack{D_1,D,D_2\\r_1,r_2}}  \quad
\sump_{  \substack{\lambda_1, \lambda_2 \pmod p\\    a \pmod{p}^*   } }  
  C \left(  D_1+ \frac{2 a D}{p} + \frac{a^2 D_2}{p^2}, D_2, D+ \frac{a D_2}{p}\right)
  e^{ 2 \pi i \left( -\frac{D_1}{4m}\tau - \frac{D_2}{4m}\zeta -\frac{D}{2m}u\right) } 
   e^{2 \pi i \left(\frac{r_1^2}{4m}\tau +\frac{r_2^2}{4m} \zeta + \frac{r_1 r_2}{2m} u + r_1 z_1+ r_2 z_2 \right)   },
\end{multline*}  
where in $\sump$ the sum runs over those $\lambda_1,\lambda_2$ and $a$ that satisfy (\ref{congruence1}) and  (\ref{congruence2}).
Now we can argue as before.
The case of $M_8$ is proven similarly. 

  We next  consider the action of    
  $M_9:=\sum_{\substack{ \substack{a,z \pmod p \\ x  \pmod p^*\\  \kappa \pmod{p^2}           }}}M_9(a,x,z,\kappa)$. We compute 
    $$
   M_9(a,x,x,\kappa)
   \circ \mat{\Tau}{Z^t}{Z}{w_0}
   =  
\matt{\tau}{- \frac{a \tau}{p} +\frac{u}{p}}{z_1}{- \frac{a \tau}{p} +\frac{u}{p}}{\frac{a^2 \tau}{p^2} - 
\frac{2au}{p^2} +\frac{\zeta}{p^2}  }{-\frac{az_1}{p}+\frac{z_2}{p} }{z_1}{-\frac{az_1}{p} +\frac{z_2}{p}}{w_0} 
      + \matt{\frac{x}{p}}{\frac{z}{p}}{0}{\frac{z}{p}}{\frac{\kappa}{p^2}-\frac{az}{p^2}}{0}{0}{0}{0}.
  $$
   Since it  turns out  that $M_{12}$ preserves  (\ref{Hecke_Duality_Relations}) we may complete the sum over $x$ into a sum over all $x$ modulo $p$.  
  The sum over $x,z$, and $\kappa$  vanishes unless 
    $p|n_{11},n_{12}$ and $p^2|n_{22}$ in which case it equals $p^4$.   
       We make the change of variables
  \begin{eqnarray*}
  n_{11}':=    n_{11} - \frac{a  n_{12} }{p}+ \frac{a^2}{p^2} n_{22},\quad
  n_{12}':= \frac{n_{12}}{p} -2\frac{a}{p^2} n_{22},\quad
  n_{22}':=\frac{n_{22}}{p^2}, \quad
  r_1':=r_1- \frac{a}{p} r_2,\quad
  r_2':=\frac{r_2}{p}
  \end{eqnarray*} 
  and   obtain
  \begin{equation*}
  D_1=  D_1'+ 2a D' + a^2   D_2' ,\qquad
  D_2=p^2D_2',\qquad
  D= p(D'+aD_2')  .
  \ \end{equation*} 
   This yields     
     \begin{multline*} 
     \Phi|_{k,m}M_9M(\Tau,Z)
 p^{8-k} 
\sum_{\substack{D_1,D,D_2\\r_1,r_2}}\,\,  
\sum_{   a\!\!\!\!\!\! \pmod p^* }  
  C \left(  D_1+ 2a D+ a^2   D_2, p^2D_2, p(D+ a D_2)\right) 
  \\
  e^{ 2 \pi i \left( -\frac{D_1}{4m}\tau - \frac{D_2}{4m}\zeta -\frac{D}{2m}u\right) } 
   e^{2 \pi i \left(\frac{r_1^2}{4m}\tau +\frac{r_2^2}{4m} \zeta + \frac{r_1 r_2}{2m} u + r_1 z_1+ r_2 z_2 \right)   }.
\end{multline*}  
Now we  can argue as before.  
The matrices $M_{10}$, $M_{11}$, and  $M_{12}$ can be considered similarly.  
\section{Proof of Theorem  \ref{EisenTheorem}}\label{EisenSection}
As  explicit examples of  Jacobi forms of degree $2$ which  are elements  of $\mathbb{E}_{k,m}$,
 we define Jacobi Eisenstein series of Siegel type
\begin{eqnarray*}
E_{k,m}^{J,n}(\Tau,Z):= \sum_{\gamma \in   \Gamma_{\infty}^J   \backslash \Gamma_n^J   }
J^{-1}_{k,m}(\gamma, (\Tau,Z)).
\end{eqnarray*}
Here   $\Gamma_{\infty}^J$ is the stabilizer group of the function $J_{k,m}^{-1}$.
  The series $E_{k,m}^{J,n}$ is 
absolutely convergent for $k>n+2$ and defines a non-trivial Jacobi form.  
Since related functions  occur in work of Arakawa \cite{Ar94}, 
we choose for the readers convenience to  use his  parametrization of the Jacobi group in terms of matrices    to simplify consulting 
this paper for related calculations.
 We note that the Eisenstein series viewed as functions are the same.
Only formally the sets of representatives of cosets defining the Eisenstein series have a different parametrization which can be related to each other by conjugation. 

We have  that  
$\Gamma_{\infty}^J= \left\{ \gamma \in \Gamma_n^J \vert \,\lambda = 0, \, g \in \Gamma_{\infty}
\right\}, $   where   $\Gamma_{\infty}$ is  the subgroup of $\Gamma_n$ with $C=0$. 
It follows from the definition of the Eisenstein series that   
\begin{equation*}
E_{k,m}^{J,n }(\Tau,Z) 
= \sum_{ \lambda \in \Z^n} \sum_{ g \in \Gamma_{\infty }  \backslash \Gamma_n}
J_{k,m}^{-1} \Big(
 \left( \left( \lambda,0,0  \right),g\right),
 (\Tau,Z) \Big).
\end{equation*}   
In the following we  restrict to   $n=2$ and  
 analyze the decomposition of $\Gamma_{\infty} \backslash \Gamma_2$ 
with respect to $\Gamma  \times \Gamma$.   
Work of Garrett \cite{Ga84}  implies that  
\begin{equation*}
\Gamma_2 = \Gamma_{\infty} (\Gamma \times \Gamma) \, \cup \, 
\bigcup_{d=1}^{\infty} \Gamma_{\infty} \,\,h_d'\, \,\Gamma \times \Gamma, 
\end{equation*}
where 
$h_d' := \left(  \begin{smallmatrix}1&0&0&0\\ 0&1&0&0\\ 
0&d&1&0\\ d&0&0&1 \end{smallmatrix} \right).$  
A straightforward calculation gives that $\Gamma_{\infty} \backslash \Gamma_2$ can be written as the union of 
$\left(\Gamma_{\infty  } \backslash \Gamma\right) \times \left(\Gamma_{\infty} \backslash  \Gamma\right)$ and
$\bigcup_{d=1}^{\infty} h_d' \left(\left(
\Gamma(d) \backslash \Gamma \right) \times \Gamma\right)$, where 
$\Gamma(d):= \left(\begin{smallmatrix} d^{-1}& 0\\0 &d   
\end{smallmatrix}  \right)  \Gamma  \left(\begin{smallmatrix} 
d& 0\\0 &d^{-1}   \end{smallmatrix}  \right) \cap \Gamma$.  
Since $I_2 \times \left(\begin{smallmatrix} 0& -1\\1 &0   
\end{smallmatrix}  \right)$ is an element of $I_2 \times \Gamma$,
we can replace $h_d'$ by 
$$h_d:=
h_d'  \, \left( I_2 \times \left(\begin{smallmatrix} 0& -1\\1 &0   \end{smallmatrix}  \right)\right) = 
 \left(  \begin{smallmatrix}1&0&0&0\\ 0&0&0&-1\\ 0&0&1&-d\\ d&1&0&0 \end{smallmatrix} \right). 
$$  
We denote the subseries  corresponding to $\left(\Gamma_{\infty} 
\backslash \Gamma \right) \times  \left(\Gamma_{\infty} \backslash \Gamma \right) \text{ and }
\bigcup_{d=1}^{\infty} h_d' \big(\left(
\Gamma(d) \backslash \Gamma \right) \times \Gamma\big)$ by $E_{I}$ and $E_{II}$, respectively.  
One computes that  
\begin{equation*}
E_{I}(\Tau,Z)= \sum_{ \substack{ \lambda \in \Z^2\\ g,h \in \Gamma_{\infty} \backslash \Gamma}}
J_{ k,m}^{-1} \left( (\lambda,0,0) g \times h  ), (\Tau,Z) \right).
\end{equation*}
The   conjugation law of the Heisenberg group  implies that  
\begin{equation*}
h_d^{-1} \big((x,y),(0,0),0\big) h_d
=(x,0,0) \times (0,-y,0).
\end{equation*}
This yields 
\begin{equation*}
E_{II}(\Tau,Z)= \sum_{d=1}^{\infty } \sum_{ \gamma_1,\gamma_2} 
J_{ k,m}^{-1} \left( h_d \cdot ( \gamma_1 \times \gamma_2 ), (\Tau,Z) \right),
\end{equation*}
where 
$\gamma_1 \in \big( 
(\Z,0,0), 
\big( \Gamma(d) \backslash \Gamma \big) \times I_2  \big)$ and 
$\gamma_2\in\big( (0,\Z,0) ,   I_2  \times \Gamma \big)$.
We treat the subseries $E_I$ and $E_{II}$  separately.
\subsection{The subseries  $E_I$}
			\begin{proposition} 
			The series    
$E_{I}(\Tau,Z)$ satisfies  (\ref{Hecke_Duality_Relations})   for all primes $p$ with $(p,2m)=1$. 
If   $m$ is  square-free, then  this is true for   all primes $p$.  
			\end{proposition}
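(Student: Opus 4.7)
My plan is to invoke Theorem~\ref{DualityTheorem} and instead prove $E_I \vert \bigl(T^J(p)^\uparrow - T^J(p)^\downarrow\bigr) = 0$. The key structural feature of $E_I$ is that its defining summation is manifestly symmetric under the involution $\sigma$ exchanging the two $\Gamma$-factors (swapping $g \leftrightarrow h$ and $\lambda_1 \leftrightarrow \lambda_2$); this $\sigma$ acts on the variables by $\tau \leftrightarrow \zeta$, $z_1 \leftrightarrow z_2$, leaves $u$ fixed, and by construction conjugates $T^J(p)^\uparrow$ to $T^J(p)^\downarrow$. Thus if $E_I$ is a Hecke eigenform for $T^J(p)^\uparrow$, then it is automatically a Hecke eigenform for $T^J(p)^\downarrow$ with the same eigenvalue, and the difference annihilates $E_I$.

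To establish an eigenvalue equation $E_I \vert T^J(p)^\uparrow = \lambda_p \cdot E_I$, I would use the cocycle identity for $J_{k,m}$ to push $T^J(p)^\uparrow$ inside the defining sum for $E_I$. Because this operator contributes only symplectic and Heisenberg data supported in the first factor, it combines with the outer first-factor summation $(g, \lambda_1)$; the left-coset decompositions recorded in Section~\ref{HeckeSection} permit re-indexing this combined sum so that it reproduces the action of the classical degree-one Hecke operator $T^J(p)$ on the Jacobi Eisenstein series in the first variable, with the second-factor data $(h, \lambda_2)$ held fixed as parameters. The standard Hecke eigenvalue of $E_{k,m}^{J,1}$ under $T^J(p)$ for $(p, 2m) = 1$ (cf.~\cite{EZ85}) then yields the claimed scalar $\lambda_p$, and by the $\sigma$-symmetry argument above, the same scalar emerges from $T^J(p)^\downarrow$.

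The main obstacle will be the coset bookkeeping in this unfolding. Although the summation indices of $E_I$ decouple cleanly, the Siegel variable $\Tau = \sm{\tau}{u}{u}{\zeta}$ entangles the two factors through its off-diagonal entry $u$, and this coupling enters non-trivially both in $\det J(g \times h, \Tau)$ and in the exponential part of the cocycle $J_{k,m}$. I would need to verify carefully that after absorbing the Hecke data into the first-factor summation, the $u$-dependent contributions reorganize so that no residual ``mixing'' with the untouched second-factor sum remains, and the reduction to the degree-one eigenvalue identity genuinely goes through.

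For the stronger statement with $m$ square-free and $p$ allowed to divide $m$, the symmetry reduction is identical, so the entire argument reduces to the corresponding eigenvalue statement for $E_{k,m}^{J,1}$ at bad primes. This refinement is available for square-free index via Atkin--Lehner-style involutions at each prime $p \mid m$, and with that input in hand the proof for $E_I$ requires no further modification.
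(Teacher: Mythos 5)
Your proposal follows essentially the same route as the paper: the paper's proof likewise unfolds $E_I$ via the cocycle relation into a sum of copies of the degree-one Eisenstein series $E_{k,m}^J$ evaluated at restricted points $\left(\widetilde{\gamma}^\downarrow \circ (\Tau,Z)\right)^{*}=(\tau,z_1)$, lets $T^J(p)^\uparrow$ pass through to act as the classical $T^J(p)$ on that inner Eisenstein series, and then invokes its Hecke eigenform property (citing \cite{EZ85} for $(p,m)=1$ and \cite{He99} for $p\mid m$ with $m$ square-free) together with the evident symmetry in the two factors. The $u$-coupling you flag as the main obstacle is resolved in the paper exactly by this pullback/restriction device, so your plan is sound and matches the published argument.
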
 
\begin{proof}
The cocyle relation of $J_{k,m}$ yields
\begin{equation}\label{num}
E_{I}(\Tau,Z)
 =
\sum_{ \gamma, \widetilde{\gamma} \in \Gamma_{\infty}^J \backslash \Gamma^J}
J_{ k,m}^{-1} \left( 
\gamma \times \widetilde{\gamma} , (\Tau,Z) 
\right) 
=
\sum_{ \widetilde{\gamma} \in \Gamma_{\infty}^J \backslash \Gamma^J}
E_{k,m}^J \left( 
\left( \widetilde{\gamma}^\downarrow \circ (\Tau,Z)\right)^{*} 
\right) 
J_{ k,m}^{-1} \left( 
\widetilde{\gamma}^\downarrow , (\Tau,Z) 
\right),
\end{equation}
where $(\Tau,Z)^{\ast} := (\tau,z_1) \in \H^J$.    
Without loss of generality we may assume that   
$T^J(p) = 
\sum_j \Gamma^J \, \eta_j$, where $p$ is a prime 
and $\eta_j \in G^J(\Q)$. Using (\ref{num}) gives that $\left(E_I \vert T^J(p)^\uparrow \right) (\Tau, Z)$ equals
\begin{multline*}
 \sum_{ \widetilde{\gamma} \in \Gamma_{\infty}^J \backslash \Gamma^J} \sum_j
E_{k,m}^J \left( 
\left( \widetilde{\gamma}^\downarrow \eta_j^\uparrow \circ (\Tau,Z)\right)^{*} 
\right)
J_{ k,m}^{-1} \left( 
\widetilde{\gamma}^\downarrow , \eta_j^\uparrow \circ (\Tau,Z) 
\right)
\,\,
J_{ k,m}^{-1} \left( 
\eta_j^\uparrow, (\Tau,Z) 
\right)  \\
=  
\sum_j
\sum_{ \widetilde{\gamma} \in \Gamma_{\infty}^J \backslash \Gamma^J} 
E_{k,m}^J 
\Big( 
\eta_j \circ 
\left( \widetilde{\gamma}^\downarrow \circ (\Tau,Z)\right)^{*} 
\Big)   
\,\,J_{ k,m}^{-1} \left( 
\eta_j, 
\left(\widetilde{\gamma}^\downarrow \circ (\Tau,Z) \right)^{\ast}
\right)
\,\,
J_{ k,m}^{-1} \left( 
\widetilde{\gamma}^\downarrow ,(\Tau,Z) 
\right). 
\end{multline*} 
This yields  
\begin{eqnarray*}
\left(E_I \vert T^J(p)^\uparrow \right) (\Tau, Z) 
= 
\sum_{ \widetilde{\gamma} \in \Gamma_{\infty}^J \backslash \Gamma^J}
\left(E_{k,m}^J \vert T^J(p)\right)
\left( 
\left( \widetilde{\gamma}^\downarrow \circ (\Tau,Z)\right)^{*} 
\right)  \,\,
J_{ k,m}^{-1} \left( 
\widetilde{\gamma}^\downarrow ,(\Tau,Z) 
\right). 
\end{eqnarray*}
By formula $(13)$ in \cite{EZ85} it follows that for $(p,m)=1$ the  
 Jacobi Eisenstein series of degree $1$ and index $m$
is a Hecke Jacobi eigenform.  
If $m$ is square-free it follows from  formula ($51$) in the table on page 224 of \cite{He99} 
   that  this also true if  $p|m$. Thus 
     the claim of the proposition  follows.
\end{proof}  
\subsection{The subseries $E_{II}$}
In this section we prove an explicit formula  for  $E_{II}$ as an infinite sum of certain functions
$P_{k,m}^J \left( \Tau, Z \right) $ of Poincar\'{e} type. 
For this purpose define   the function
\begin{eqnarray*}
\psi_{k,m}(\Tau,Z) :=  (\tau+2u+\zeta)^k\, e \left(\frac{m(z_1+z_2)^2}{\tau+2u +\zeta}  \right),
\end{eqnarray*} 
which  is   related to the Jacobi Eisenstein series.
A direct  calculation leads  to
				\begin{eqnarray} \label{simplification}
				J_{k,m}(h_d,(\Tau,Z)  )=   \psi_{k,m} 
				(\M_d\circ (\Tau,Z)).
				\end{eqnarray}   
				Here $\M_d:=  \left( \begin{smallmatrix} 
				d&0\\0&d^{-1}   \end{smallmatrix}  \right) \times I_2$.
Next  define the  unique  involution  $\#: G^J(\R) \to G^J(\R)$  
on the generators of the Jacobi group by  $(\lambda,0,0)^{\#} := (-\lambda, 0,0)$, 
$(0,\mu,\kappa)^{\#} := (0, \mu,\kappa)$, 
$\left(\begin{smallmatrix} a&b\\ c&d \end{smallmatrix} \right)^{\#} :=
\left(\begin{smallmatrix} d&b\\ c&a \end{smallmatrix} \right)$ that   satisfies 
$(\gamma_1 \gamma_2 )^{\#} := \gamma_2^{\#} \gamma_1^{\#}$ for all
$\lambda, \mu, \kappa \in \R$ and $\gamma_1,\gamma_2 \in G^J(\R)$.  
This involution preserves double cosets in the Hecke Jacobi algebra.
A straightforward but lengthy  calculation gives.
\begin{lemma}\label{dual}
For $\gamma \in G^J(\R)$ we have 
\begin{eqnarray*}
\psi_{k,m}^{-1} | \gamma^{\uparrow} = \psi_{k,m}^{-1}| \left. \gamma^{\#} \right.^{\downarrow}.
\end{eqnarray*}   
\end{lemma}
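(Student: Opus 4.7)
The plan is to reduce the identity to a finite check on a generating set of $G^J(\R)$.

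First, we establish multiplicativity: if the claim holds for $\gamma_1$ and $\gamma_2$, then it holds for $\gamma_1\gamma_2$. This combines the cocycle property of the slash operator, the anti-homomorphism rule $(\gamma_1\gamma_2)^\# = \gamma_2^\#\gamma_1^\#$, and the fact that the images of $\uparrow$ and $\downarrow$ commute inside $G_2^J(\R)$. The last point is a direct consequence of the Heisenberg group law: a $\lambda$-vector coming from $\uparrow$ has zero second entry while one coming from $\downarrow$ has zero first entry, so the commutator term $\lambda_1\mu_2^t - \mu_1\lambda_2^t$ vanishes, and the symplectic parts of $\uparrow$ and $\downarrow$ sit in disjoint block positions of $Sp_4(\R)$. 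Granted multiplicativity, it suffices to verify the lemma on the generators $(\lambda,0,0)$, $(0,\mu,\kappa)$, $\sm{a}{0}{0}{a^{-1}}$, $\sm{1}{b}{0}{1}$, and $J=\sm{0}{-1}{1}{0}$.

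The conceptual reason behind the identity is that $\psi_{k,m}$ depends on $(\Tau,Z)$ only through the symmetric combinations $\tau+2u+\zeta$ and $z_1+z_2$, so it is invariant under the swap $\sigma:(\tau,u,\zeta,z_1,z_2)\mapsto(\zeta,u,\tau,z_2,z_1)$. Intertwining by $\sigma$ sends $\gamma^\uparrow$ to $(\gamma^\#)^\downarrow$: interchanging the two diagonal blocks of the symplectic matrix exchanges the entries $a$ and $d$ while fixing $b,c$, and swapping the two Heisenberg coordinates combined with the change of orientation forces the sign flip in $(\lambda,0,0)^\#=(-\lambda,0,0)$. The entries $b,c$ of the symplectic matrix and the components $(\mu,\kappa)$ of the Heisenberg element are unaffected, matching the definition of $\#$.

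For the direct verification on each generator, one computes $\gamma^\uparrow\circ(\Tau,Z)$ and $(\gamma^\#)^\downarrow\circ(\Tau,Z)$ explicitly and substitutes into $\psi_{k,m}^{-1}$, matching against the cocycle factors $J_{k,m}^{-1}$. The nontrivial cancellations occur for $\gamma=(\lambda,0,0)$ and for $\gamma=J$. For $(\lambda,0,0)$, after expanding the difference of the quadratic exponents $(z_1+z_2+\lambda(\tau+u))^2-(z_1+z_2-\lambda(u+\zeta))^2$ and invoking the algebraic identity $(\tau+u)^2-(u+\zeta)^2=(\tau-\zeta)(\tau+2u+\zeta)$, the cross terms contributed by $\psi_{k,m}$ cancel exactly against the Heisenberg term $\lambda^2(\tau-\zeta)$ coming from the cocycle $J_{k,m}$, whose sign is determined by the flip $\lambda\mapsto-\lambda$. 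For $J$, since $J^\#=J$, one only needs $\psi_{k,m}^{-1}|J^\uparrow=\psi_{k,m}^{-1}|J^\downarrow$, which is immediate from the $\sigma$-symmetry of $\psi_{k,m}$. The remaining generators are essentially immediate; the main technical obstacle is therefore the bookkeeping in the Heisenberg case, where the sign flip is exactly the compensation required by the identity above.
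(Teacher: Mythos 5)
Your overall strategy is sound, and it is in fact more informative than the paper's treatment, which dismisses the lemma as ``a straightforward but lengthy calculation'' without further detail. The multiplicativity reduction is correct: the images of $\uparrow$ and $\downarrow$ commute for exactly the reasons you give, and the anti-homomorphism property of $\#$ is precisely what makes the induction on products close up. Your computation for the Heisenberg generators $(\lambda,0,0)$ is also the right one; the factorization $(\tau+u)^2-(u+\zeta)^2=(\tau-\zeta)(\tau+2u+\zeta)$ is indeed the heart of that case, and the sign flip $\lambda\mapsto-\lambda$ in $\#$ is exactly what the cocycle term requires.

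There is, however, a genuine gap in your treatment of the generator $J=\sm{0}{-1}{1}{0}$, and the ``conceptual reason'' you give is incorrect as stated. Conjugation by the swap $\sigma:(\tau,u,\zeta,z_1,z_2)\mapsto(\zeta,u,\tau,z_2,z_1)$ sends $\gamma^{\uparrow}$ to $\gamma^{\downarrow}$, \emph{not} to $(\gamma^{\#})^{\downarrow}$: interchanging the two factors moves the same matrix $\sm{a}{b}{c}{d}$ (and the same $\lambda$) to the other block; it does not transpose $a$ and $d$, nor does it negate $\lambda$. You can see the discrepancy already on $\sm{a}{0}{0}{a^{-1}}$, where $\psi_{k,m}^{-1}|\gamma^{\uparrow}=\psi_{k,m}^{-1}|(\gamma^{\#})^{\downarrow}$ holds but $\psi_{k,m}^{-1}|\gamma^{\uparrow}=\psi_{k,m}^{-1}|\gamma^{\downarrow}$ fails. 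Consequently, the $\sigma$-invariance of $\psi_{k,m}$ yields only $\psi_{k,m}^{-1}|\gamma^{\uparrow}=\bigl(\psi_{k,m}^{-1}|\gamma^{\downarrow}\bigr)|\sigma$, with a residual $|\sigma$ that cannot be dropped in general; applying the same identity twice just returns you to where you started. For $\gamma=J$ (where $J^{\#}=J$) the required identity $\psi_{k,m}^{-1}|J^{\uparrow}=\psi_{k,m}^{-1}|J^{\downarrow}$ is therefore not ``immediate'' from symmetry and needs its own verification. It does hold: both sides evaluate to $\bigl(\tau\zeta-(1-u)^2\bigr)^{-k}\,e\bigl(-m\bigl(\zeta z_1^2+2(1-u)z_1z_2+\tau z_2^2\bigr)/\bigl(\tau\zeta-(1-u)^2\bigr)\bigr)$, a visibly $\sigma$-symmetric expression --- but that symmetry is the \emph{outcome} of the computation, not a substitute for it. Once you supply this check (or the equivalent one for the lower-triangular unipotents $\sm{1}{0}{c}{1}$, which are likewise fixed by $\#$), your argument is complete.
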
 
We next define for     $D \in Sp_1(\Q)$   the holomorphic function
\begin{equation*}
P_{k,m}^J \left(\left( \Tau, Z \right), D\right) := 
\sum_{\gamma \in Z(\Gamma^J) 
\backslash \Gamma^J \, D \, \Gamma^J} \left( \psi_{k,m}^{-1} \vert 
\gamma^\uparrow \right) \left( \Tau,Z\right),  
\end{equation*}
where $Z( \Gamma^J)$ is the center of $\Gamma^J$. 
This series is absolutely and locally uniformly convergent.  
Its restriction   to  $\H^J \times \H^J$ yields   the Poincar\'{e} series given
in \cite{Ar94}. 
Moreover  there exist a unique   $d \in\N$ with 
$D = 
\left(\begin{smallmatrix}  
d & 0 \\ 0 & d^{-1} \end{smallmatrix}\right)$.  
Lemma  \ref{dual}   
and the 
invariance of double cosets with respect to the involution $\#$ gives that 
\begin{equation*}
P_{k,m}^J \left(\left( \Tau, Z \right), D\right) = \sum_{\gamma \in Z(\Gamma^J) 
\backslash \Gamma^J \, D \, \Gamma^J} \left( \psi_{k,m}^{-1} \vert \gamma^\downarrow \right) \left( \Tau,Z\right).
\end{equation*}
Moreover for  $U \in SL_2(\Z)$, we define   $P_{k,m}^J \left(\Tau, Z\right) : = P_{k,m}^J \left(\left( \Tau, Z \right), U\right)$ . Then we have 
\begin{equation}\label{transform}
\left(P_{k,m}^J\vert
\mathbb{X}(d)^\uparrow \right)
 \left(\Tau, Z\right) = P_{k,m}^J \left(\left( \Tau, Z \right), D\right).
\end{equation}   
\begin{lemma}
Assume that $m$ is square-free. Then we have  
\begin{equation} \label{obenunten}
\left(P_{k,m}^J\vert
\mathbb{X}(l)^\uparrow\right)
 \left(\left(\Tau, Z\right), 
\left(\begin{smallmatrix}  
d & 0 \\ 0 & d^{-1} \end{smallmatrix}\right) \right) = 
\left(P_{k,m}^J\vert
\mathbb{X}(l)^\downarrow \right)
 \left(\left(\Tau, Z\right), 
\left(\begin{smallmatrix}  
d & 0 \\ 0 & d^{-1} \end{smallmatrix}\right) \right).
\end{equation}  
If $m$ is arbitary (\ref{obenunten}) is also satisfied if $(l,d)=1$ or $(ld,m)=1$.
\end{lemma}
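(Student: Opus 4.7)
The plan is to use Lemma \ref{dual} to rewrite both sides of (\ref{obenunten}) as sums involving only $\downarrow$-translates of $\psi_{k,m}^{-1}$, and then to reduce the desired equality to a commutativity statement in the Hecke Jacobi algebra $\mathcal{H}^J$.

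Writing $\mathbb{X}(l) = \sum_j \Gamma^J \eta_j$ and using the $\uparrow$-representation $P_{k,m}^J((\Tau,Z),D) = \sum_\gamma \psi_{k,m}^{-1}|\gamma^\uparrow$ (with $\gamma \in Z(\Gamma^J)\backslash \Gamma^J D \Gamma^J$), the left-hand side of (\ref{obenunten}) unfolds as $\sum_{\gamma,j} \psi_{k,m}^{-1}|(\gamma\eta_j)^\uparrow$, which Lemma \ref{dual} converts into $\sum_{\gamma,j} \psi_{k,m}^{-1}|(\eta_j^\#\gamma^\#)^\downarrow$. For the right-hand side I observe that $\gamma^\uparrow$ and $\eta_j^\downarrow$ commute (their images in $G_2^J$ sit in commuting copies of $G^J$), so applying Lemma \ref{dual} to $\gamma^\uparrow$ rewrites the RHS as $\sum_{\gamma,j} \psi_{k,m}^{-1}|(\gamma^\# \eta_j)^\downarrow$.

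Next I would exploit that $\#$ preserves both $\Gamma^J D \Gamma^J$ (since $D^\# = D^{-1}$ lies in this double coset via a Weyl element) and $\mathbb{X}(l)$ (as noted just before Lemma \ref{dual}). This makes $\gamma\mapsto \gamma^\#$ a bijection on $Z(\Gamma^J)\backslash \Gamma^J D \Gamma^J$ and $\eta_j\mapsto \eta_j^\#$ a bijection between left- and right-coset representatives of $\mathbb{X}(l)$. After these substitutions, (\ref{obenunten}) reduces to
\[
\sum_{\gamma,j} \psi_{k,m}^{-1}|(\eta_j\gamma)^\downarrow = \sum_{\gamma,j} \psi_{k,m}^{-1}|(\gamma\eta_j)^\downarrow,
\]
which in turn follows from the identity $\mathbb{X}(l)\cdot\mathbb{X}(d) = \mathbb{X}(d)\cdot\mathbb{X}(l)$ in $\mathcal{H}^J$, where $D = \sm{d}{0}{0}{d^{-1}}$.

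It then remains to verify this commutativity in each of the three situations. If $(l,d)=1$, both products equal $\mathbb{X}(ld)$ by the multiplicativity $\mathbb{X}(l_1 l_2) = \mathbb{X}(l_1)\mathbb{X}(l_2)$ for coprime $l_1,l_2$ recalled in Section \ref{heckejacobi}. If $(ld,m)=1$, only local components $\mathcal{H}_p^J$ with $p\nmid m$ enter the product, and these are commutative by the classical Hecke theory for Jacobi forms (\cite{EZ85}). The main obstacle is the square-free case: for $m$ square-free one must additionally handle primes $p\mid m$, at which $\mathcal{H}_p^J$ is a priori non-commutative. The square-free hypothesis forces $p\|m$, which restricts the local structure enough that a commutativity result of Arakawa \cite{Ar94} applies. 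This is the substantive step; everything else is formal manipulation with the involution $\#$.
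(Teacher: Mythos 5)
Your proof is correct and takes essentially the same route as the paper's: both arguments reduce the identity to the commutativity $\mathbb{X}(l)\,\mathbb{X}(d)=\mathbb{X}(d)\,\mathbb{X}(l)$ on $J_{k,m}$, using Lemma \ref{dual}, the $\#$-invariance of double cosets, and formula (\ref{transform}). The paper's proof is just these two ingredients stated in one line; you have merely made explicit the coset manipulations (and the case discussion for the commutativity) that the paper leaves implicit.
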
 
\begin{proof}  
The conditions that $m$ is square-free and that $(l,d)=1$ or $(ld,m)=1$ otherwise, imply that  
on $J_{k,m}$ we have 
\begin{equation*}
\mathbb{X}(l) \, \mathbb{X}(d)  = \mathbb{X}(d) \,\,\mathbb{X}(l). 
\end{equation*}
 Finally we apply formula (\ref{transform}) to get the lemma.
\end{proof} 
					\begin{proposition} 
					We have
					\begin{equation}
					E_{II} \left( \Tau,Z \right) = \sum_{d=1}^{\infty} 
					\left( P_{k,m}^J \vert \mathbb{X}(d)^\uparrow \right) (\Tau,Z) \,\, d^{-k}.
					\end{equation}
					\end{proposition}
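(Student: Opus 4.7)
The plan is to prove the identity term by term in $d$. By (\ref{transform}), for $D = \left(\begin{smallmatrix}d & 0\\ 0 & d^{-1}\end{smallmatrix}\right)$, the $d$-th summand on the right hand side of the proposition equals $d^{-k}\,P_{k,m}^J\bigl((\Tau,Z),D\bigr)$, so it suffices to establish, for each $d\geq 1$,
\begin{equation*}
\sum_{\gamma_1,\gamma_2} J_{k,m}^{-1}\bigl(h_d(\gamma_1\times\gamma_2),(\Tau,Z)\bigr) \; = \; d^{-k}\sum_{\gamma\in Z(\Gamma^J)\backslash \Gamma^J D \Gamma^J}\bigl(\psi_{k,m}^{-1}|\gamma^\uparrow\bigr)(\Tau,Z),
\end{equation*}
and then sum over $d$.

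The first step is to transform each term on the left via the cocycle property for $J_{k,m}$ together with (\ref{simplification}):
\begin{equation*}
J_{k,m}^{-1}\bigl(h_d(\gamma_1\times\gamma_2),(\Tau,Z)\bigr) \; = \; \psi_{k,m}^{-1}\bigl(\M_d(\gamma_1\times\gamma_2)\circ(\Tau,Z)\bigr)\cdot J_{k,m}^{-1}\bigl(\gamma_1\times\gamma_2,(\Tau,Z)\bigr).
\end{equation*}
In parallel, I would derive the following companion identity: for any $g\in \Gamma^J$,
\begin{equation*}
d^{-k}\bigl(\psi_{k,m}^{-1}|(Dg)^\uparrow\bigr)(\Tau,Z) \; = \; J_{k,m}^{-1}\bigl(h_d\cdot g^\uparrow,(\Tau,Z)\bigr).
\end{equation*}
This follows from $(Dg)^\uparrow=\M_d\cdot g^\uparrow$, the cocycle, the evaluation $J_{k,m}(\M_d,\cdot)=d^{-k}$ (since the $C$-block of $\M_d$ vanishes and $\det J(\M_d,\Tau)=d^{-1}$), and (\ref{simplification}) applied in reverse. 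These two identities reduce the proposition to matching the index sets on the left and right.

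The main remaining task is a combinatorial matching: one must exhibit a bijection $(\gamma_1,\gamma_2)\mapsto g$ so that, as $(\gamma_1,\gamma_2)$ ranges over $\bigl((\Z,0,0),(\Gamma(d)\backslash\Gamma)\times I_2\bigr)\times\bigl((0,\Z,0),I_2\times\Gamma\bigr)$, the element $Dg$ traverses $Z(\Gamma^J)\backslash \Gamma^J D\Gamma^J$ exactly once. The Garrett stabilizer $\Gamma(d)=D^{-1}\Gamma D\cap\Gamma$ is precisely the stabilizer in the standard decomposition $\Gamma\backslash\Gamma D\Gamma$, so $g_1\in\Gamma(d)\backslash\Gamma$ contributes exactly one representative per left coset of $\Gamma$ in $\Gamma D\Gamma$. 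The two Heisenberg strips $(\Z,0,0)$, $(0,\Z,0)$ combined with the free symplectic factor $g_2\in\Gamma$ from the second copy must then assemble, after conjugation through $\M_d$ and using that $h_d=h_d'\cdot(I_2\times J)$ interlaces the two coordinates, into the remaining $\Gamma^J/Z(\Gamma^J)$ content on the right of $D$.

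This last bookkeeping is the principal obstacle, because the semidirect product structure of $G^J$ mixes the symplectic and Heisenberg parts under conjugation by $D$, and $h_d$ itself couples the two coordinates. Once the bijection is verified to be well-defined, one-to-one, onto, and compatible with the companion identity above, the two sums agree summand by summand, and summing over $d$ yields the proposition.
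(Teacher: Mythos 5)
Your setup (working with each $E_{II}^d$ separately, applying the cocycle relation together with (\ref{simplification}), and computing $J_{k,m}(\M_d,\cdot)=d^{-k}$) matches the paper's opening moves, and your ``companion identity'' for elements of the form $h_d\cdot g^{\uparrow}$ is correct. But there is a genuine gap at exactly the point you flag as ``the principal obstacle,'' and it is not mere bookkeeping. The terms of $E_{II}^d$ involve $\gamma_1\times\gamma_2=\gamma_1^{\uparrow}\,\gamma_2^{\downarrow}$ with $\gamma_2$ ranging over a full copy of $\bigl((0,\Z,0),I_2\times\Gamma\bigr)$ in the \emph{second} factor; such an element is not of the form $g^{\uparrow}$, so your companion identity never applies to it, and no amount of rearranging cosets will make it apply. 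Conjugation by $\M_d=D\times I_2$ preserves the two factors and cannot move $\gamma_2$ upstairs, and $h_d$ only couples the coordinates at the level of the Heisenberg part. The missing ingredient is the special symmetry of $\psi_{k,m}$ itself: since $\psi_{k,m}$ depends only on $\tau+2u+\zeta$ and $z_1+z_2$, one has $\psi_{k,m}^{-1}\vert\gamma^{\uparrow}=\psi_{k,m}^{-1}\vert(\gamma^{\#})^{\downarrow}$ for the involution $\#$ (Lemma \ref{dual} of the paper). This is what lets one trade $\gamma_2^{\downarrow}$ for $(\gamma_2^{\#})^{\uparrow}$ and collapse each term into $\psi_{k,m}^{-1}\vert(\gamma_2^{\#}D\gamma_1)^{\uparrow}$ up to the factor $d^{-k}$. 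Without this lemma (or an equivalent observation) your reduction to a single upstairs element has no mechanism to absorb $\gamma_2$.

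Once the duality is in hand, the correct combinatorial statement is also different from the one you pose: the map is $(\gamma_1,\gamma_2)\mapsto\gamma_2^{\#}D\gamma_1$, and the target is the full two-sided coset $Z(\Gamma^J)\backslash\Gamma^J D\,\Gamma^J$, with the left copy of $\Gamma^J$ supplied by $\gamma_2^{\#}$ (note $\bigl((0,\Z,0)\Gamma\bigr)^{\#}=\Gamma\,(0,\Z,0)$) and the right copy by $\gamma_1$; the identity $D^{-1}(\lambda',0,0)D=(d\lambda',0,0)$ then reconciles the strip $(\Z,0,0)$ with the quotient $\Gamma(d)\backslash\Gamma$. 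Your formulation ``exhibit $g$ so that $Dg$ traverses the double coset'' forces a one-sided parametrization $D\cdot(\cdots)$ that does not match this structure. So the proposal needs both the duality lemma and a reformulated coset analysis before it closes.
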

\begin{proof}
Denote by   $E_{II}^d $ the subseries of $E_{II}$ corresponding to 
$d \in \N$. Then we have by  (\ref{simplification})
\begin{equation*}
E_{II}^d(\Tau,Z) = \sum_{\gamma_1,\gamma_2} \psi_{k,m}^{-1} 
\left(
\M_d \, (\gamma_1 \times \gamma_2) \circ (\Tau,Z) \right)
\,\,
J_{k,m}^{-1} \left( \gamma_1 \times \gamma_2, (\Tau,Z) \right).
\end{equation*}
Here
$\gamma_1 \in \Big( 
(\Z,0,0), 
\Gamma(d) \backslash \Gamma \times I_2 \Big)$ and 
$\gamma_2  \in \Big( (0,\Z,0),    I_2  \times \Gamma \Big)$. 
Using Lemma \ref{dual},   a  
 straightforward, but lengthy calculation gives
\begin{equation*}
E_{II}^d(\Tau,Z) = d^{-k} \sum_{\gamma_1,\gamma_2} \psi_{k,m}^{-1} 
\left(
\left(\gamma_2^{\#} 
\left( \begin{smallmatrix} d & 0 \\ 0 & d^{-1} \end{smallmatrix} \right) 
\gamma_1\right)^\uparrow \circ (\Tau,Z)
\right)
J_{k,m}^{-1} \left( \left(
\gamma_2^{\#} \left( \begin{smallmatrix} d & 0 \\ 0 & d^{-1} 
\end{smallmatrix} \right) \gamma_1\right)^\uparrow , (\Tau,Z) \right).
\end{equation*}
To complete the proof it   remains to analyze the set
\begin{equation}\label{set}
\Big\{\gamma_2^{\#} \left( \begin{smallmatrix} d & 0 \\ 0 & d^{-1} \end{smallmatrix} \right) \gamma_1
\Big{\vert} \,\,
\gamma_2 = \left( (0,\mu,0)g\right), \, \gamma_1 = \left( (\lambda,0,0)h \right) 
\text{ with } 
g \in \Gamma, \, h \in \Gamma(d) \backslash \Gamma \text{ and } \lambda, \mu \in \Z
\Big\}.
\end{equation}
First we note that $\Big((0,\Z,0)\, \Gamma\Big)^{\#} =\Gamma \,(0,\Z,0)$.   
Hence the set in (\ref{set}) is equal to \\
$\Gamma \,\, (0,\Z,0) \,\left( \begin{smallmatrix} d & 0 \\ 0 & d^{-1} \end{smallmatrix} 
\right) (\Z,0,0) \,\,\Gamma(d) \backslash \Gamma$, which equals  
\begin{equation*}
\Gamma \,(0,\Z,0) \,\, (\Z,0,0) \left( \begin{smallmatrix} d & 0 \\ 0 & d^{-1} \end{smallmatrix} 
\right) (d \Z \backslash \Z,0,0) \Gamma(d) \backslash \Gamma,
\end{equation*}
since 
$\left( \begin{smallmatrix} d^{-1} & 0 \\ 0 & d \end{smallmatrix} \right) 
( \lambda',0,0) 
\left( 
\begin{smallmatrix} d & 0 \\ 0 & d^{-1} \end{smallmatrix} \right) = (d \lambda',0,0)$. Here
$\lambda'$ runs modulo $d$. 
Hence the set (\ref{set}) is equal to 
$Z(\Gamma^J) \backslash \Gamma^J \left( 
\begin{smallmatrix} d & 0 \\ 0 & d^{-1} \end{smallmatrix} \right) \Gamma^J.
$
Thus
\begin{equation}
E_{II}^d (\Tau,Z) = d^{-k} \sum_{ \gamma \in Z(\Gamma^J) \backslash \Gamma^J \left( 
\begin{smallmatrix} d & 0 \\ 0 & d^{-1} \end{smallmatrix} \right) \Gamma^J}
\psi_{k,m}^{-1} \left( \gamma^\uparrow \circ (\Tau,Z)\right) \,\, J_{k,m}^{-1} \left( \gamma^\uparrow , (\Tau,Z) \right),
\end{equation}
which  leads to the proof of the proposition.
\end{proof}
\section{Eisenstein series of Klingen type} 
In this final section we show that   for even $k>4$, the space $\mathbb{E}_{k}$ is  a
proper  subspace of  $J_{k,m}^{2}$.   
Let 
$\Phi \in J_{k,m}^{\text{cusps}}$, the space of Jacobi cusp forms,  a non-trivial   
Hecke-Jacobi eigenform for all $T^J(l)$ with $(l,m)=1$.
Denote by   $
E_{k,m}^{\text{Kl}}\left( \Phi, (\Tau,Z) \right) 
$
the  Jacobi Klingen Eisenstein series of degree $2$ associated to $\Phi$.  
 Then $E_{k,m}^{\text{Kl}}\left( \Phi \right)$ is a non-trivial element of $J_{k,m}^2$. 
\begin{proposition}
We have  
\begin{equation*}
E_{k,m}^{\text{Kl}}\left( \Phi \right) \not\in \mathbb{E}_{k,m}.
\end{equation*}
\end{proposition}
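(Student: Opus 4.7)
The plan is to invoke Theorem \ref{DualityTheorem} and find a single prime $p$ with $(p,2m)=1$ at which the Hecke duality relation fails for $E_{k,m}^{\text{Kl}}(\Phi)$. Since membership in $\mathbb{E}_{k,m}$ would require $E_{k,m}^{\text{Kl}}(\Phi) \vert (T^J(p)^{\uparrow}-T^J(p)^{\downarrow}) = 0$ for \emph{every} prime coprime to $2m$, exhibiting one prime where this difference is nonzero is enough.

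The key structural point is that a Klingen Eisenstein series of degree $2$ is built by averaging $\Phi$ over cosets of a Klingen parabolic, in such a way that one of the two elliptic variables --- say $(\tau,z_1)$ --- is the ``frozen'' slot carrying $\Phi$, while the other is the ``Eisenstein'' slot produced by the averaging. The embedded operator $T^J(p)^{\uparrow}$ acts only on the frozen slot, so it commutes with the coset averaging and an unfolding argument yields
\begin{equation*}
E_{k,m}^{\text{Kl}}(\Phi) \,\vert\, T^J(p)^{\uparrow} = \lambda_p \cdot E_{k,m}^{\text{Kl}}(\Phi),
\end{equation*}
where $\lambda_p$ is the $T^J(p)$-eigenvalue of $\Phi$. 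For $T^J(p)^{\downarrow}$ a parallel computation --- organising the representatives in Section \ref{HeckeSection} against the Klingen coset representatives, just as was done in Section \ref{DualitySection} for the Siegel-type Eisenstein series --- produces a divisor-sum eigenvalue $\mu_p$ of positive degree in $p$, giving
\begin{equation*}
E_{k,m}^{\text{Kl}}(\Phi) \,\vert\, T^J(p)^{\downarrow} = \mu_p \cdot E_{k,m}^{\text{Kl}}(\Phi).
\end{equation*}

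Consequently $E_{k,m}^{\text{Kl}}(\Phi) \vert (T^J(p)^{\uparrow}-T^J(p)^{\downarrow}) = (\lambda_p-\mu_p)\, E_{k,m}^{\text{Kl}}(\Phi)$, and since $E_{k,m}^{\text{Kl}}(\Phi)$ is nontrivial we need only a prime where $\lambda_p \neq \mu_p$. This follows from size considerations. Because $\Phi$ is a Jacobi cusp form of index $m$, the Deligne--Ramanujan bound (through the standard isomorphism with half-integral weight elliptic cusp forms) gives $|\lambda_p| = O(p^{k-3/2+\varepsilon})$, whereas $\mu_p$ has magnitude of order $p^{k-2}$. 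Hence the two eigenvalues cannot coincide for all sufficiently large primes coprime to $2m$, and the proposition follows.

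The main obstacle is the explicit identification of the eigenvalue $\mu_p$: one must rerun an unfolding computation of Section \ref{DualitySection} type, but now on the Klingen boundary component rather than on the ``diagonal'' used for the Siegel-type Eisenstein series, and keep track of which packages of representatives $M_i$ fall within the Klingen parabolic. A secondary technical point, the availability of a subconvex Ramanujan-type bound for Hecke eigenvalues of Jacobi cusp forms, is standard and needs only to be invoked.
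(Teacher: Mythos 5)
Your reduction to finding a single prime $p$ with $(p,2m)=1$ where $E_{k,m}^{\text{Kl}}(\Phi)\vert\big(T^J(p)^{\uparrow}-T^J(p)^{\downarrow}\big)\neq 0$ is the right starting point, and your final size comparison (cuspidal Hecke eigenvalues versus Eisenstein-type eigenvalues) is in the same spirit as the paper's last step, which invokes the correspondence with elliptic forms of weight $2k-2$ from \cite{SZ}. But the central claims of your argument are false: $E_{k,m}^{\text{Kl}}(\Phi)$ is \emph{not} an eigenform of the embedded operator $T^J(p)^{\uparrow}$ with eigenvalue $\lambda_p$, nor of $T^J(p)^{\downarrow}$ with a divisor-sum eigenvalue $\mu_p$. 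The operator $T^J(p)^{\uparrow}$ is only a $\Gamma^J\times\Gamma^J$-level object; it does not commute with the averaging over $P\backslash\Gamma_2^J$ ($P$ the Klingen parabolic), because the coset representatives there do not factor through the embedded product $\Gamma^J\times\Gamma^J$, so the unfolding you describe does not go through. One can see the failure directly: restricting to $\H^J\times\H^J$ turns $T^J(p)^{\uparrow}$ into $T^J(p)\otimes\mathrm{id}$, and by the formula of Arakawa and the second author \cite{AH98} the restriction of $E_{k,m}^{\text{Kl}}(\Phi)$ equals $E_{k,m}^J\otimes\Phi+\Phi\otimes E_{k,m}^J+G$ with $G\in\mathrm{Sym}^2 J_{k,m}^{\text{cusp}}$. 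Applying $T^J(p)\otimes\mathrm{id}$ multiplies the component $E_{k,m}^J\otimes\Phi$ by $\lambda_E$ and the component $\Phi\otimes E_{k,m}^J$ by $\lambda_\Phi$; since these components are linearly independent and $\lambda_E\neq\lambda_\Phi$ for some (indeed, all large) $p$, the restriction is not an eigenfunction, so neither is $E_{k,m}^{\text{Kl}}(\Phi)$ itself. Your claimed identities would force $\lambda_E=\lambda_\Phi$ for all $p$, contradicting the very size estimates you invoke at the end.

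The paper's actual route avoids this trap: it first restricts to $\H^J\times\H^J$, where the difference $T^J(p)^{\uparrow}-T^J(p)^{\downarrow}$ becomes $T^J(p)\otimes\mathrm{id}-\mathrm{id}\otimes T^J(p)$, then applies the Arakawa--Heim decomposition above. The output is $(\lambda_E-\lambda_\Phi)\,E_{k,m}^J\otimes\Phi+(\lambda_\Phi-\lambda_E)\,\Phi\otimes E_{k,m}^J+G'$ with $G'\in\mathrm{Sym}^2 J_{k,m}^{\text{cusp}}$; the term $E_{k,m}^J\otimes\Phi$ cannot be cancelled by the cuspidal part, and $\lambda_E\neq\lambda_\Phi$ for at least one prime. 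To repair your proof you would need to replace the two false eigenform identities with this restriction-plus-decomposition argument (or some equivalent computation of the action of the embedded operators on the boundary components); the ``main obstacle'' you flag, namely rerunning the unfolding of Section \ref{DualitySection} on the Klingen boundary, is precisely the part that does not work as stated.
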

\begin{proof}
We prove the existence  of  at least one prime $p$ with $(p,m)$=1, such that
\begin{equation*}
E_{k,m}^{\text{Kl}}\left( \Phi \right) \vert_{k,m} \left(T^J(p)^\uparrow - T^J(p)^\downarrow\right) \neq 0.
\end{equation*} 
This is in particular satisfied if its  restriction to $\H^J \times \H^J$ has this property.
This  function equals
\begin{equation} \label{good}
\left( T^J(p) \otimes \text{id} - \text{id} \otimes T^J(p) \right) \Big( E_{k,m}^{\text{Kl}}
\left( \Phi \right) \Big{\vert}_{\H^J \times \H^J}\Big).
\end{equation}
In \cite{AH98} Arakawa and the second author have shown that
\begin{equation*}\label{triple}
E_{k,m}^{\text{Kl}}\left( \Phi \right) \vert_{\H^J \times \H^J} = E_{k,m}^J \otimes \Phi+  \Phi \otimes E_{k,m}^J +   G,
\end{equation*}
where 
$G \in 
\text{Sym}^2
 J_{k,m}^{\text{cusp}}$. 
 Denoting by  $\lambda_E$ and $\lambda_{\Phi}$  the eigenvalues of $E_{k,m}^J$ and $\Phi$ with
respect to the Hecke Jacobi operator $T^J(p)$ gives that   (\ref{good}) equals
\begin{equation*}
\left( \lambda_E - \lambda_{\Phi} \right)
E_{k,m}^J \otimes \Phi+  
\left(\lambda_{\Phi} - \lambda_E \right)
\Phi \otimes E_{k,m}^J +   G',
\end{equation*}
where $G' \in 
\text{Sym}^2
 J_{k,m}^{\text{cusp}}$. 
 There  exists at least one prime $p$ such that $\lambda_{E}$ is different from $\lambda_{\Phi}$,  since these eigenvalues correspond
to Eisenstein series and cusp forms of weight $2k-2$ (see \cite{SZ}).  
\end{proof} 

 \end{document}